\documentclass{article}

\usepackage[margin=3cm]{geometry}

\usepackage{amsmath}
\usepackage{amsfonts}
\usepackage{amsthm}
\usepackage{amssymb}

\usepackage{hyperref}
 
\usepackage{graphicx}

\usepackage[noend]{algpseudocode}
 \usepackage{enumitem}

\usepackage{tikz}
\usepackage{tikz-cd}
\usetikzlibrary{decorations.markings,positioning,arrows,matrix,hobby}



\newtheorem{theorem}{Theorem}
\newtheorem{definition}{Definition}
\newtheorem{proposition}[theorem]{Proposition}
\newtheorem{remark}{Remark}
\newtheorem{lemma}[theorem]{Lemma}

\newtheorem{corollary}[theorem]{Corollary}


\newcommand{\R}{\mathbb{R}}
\newcommand{\Z}{\mathbb{Z}}
\newcommand{\F}{\mathbb{F}}
\newcommand{\N}{\mathbb{N}}
\newcommand{\T}{\mathbb{T}}
\newcommand{\U}{\mathcal{U}}
\newcommand{\nerve}{\mathcal{N}}

\newcommand{\D}{\partial}
\newcommand{\ep}{\varepsilon}
\newcommand{\inc}{\hookrightarrow}
\newcommand{\surj}{\twoheadrightarrow}
\newcommand{\tr}{\mathbf{t}}

\DeclareMathOperator{\rank}{rank}

\DeclareMathOperator{\im}{im}


\title{Quantifying the homology of periodic cell complexes}  

\author{Adam Onus\footnote{School of Mathematical Sciences, Queen Mary University of London, UK (\href{a.onus@qmul.ac.uk}{a.onus@qmul.ac.uk}).
}~ and 
Vanessa Robins\footnote{Research School of Physics, The Australian National University, Canberra, Australia. (\href{vanessa.robins@anu.edu.au}{vanessa.robins@anu.edu.au}).
}}



\begin{document}
\maketitle

\begin{abstract}
A periodic cell complex, $K$, has a finite representation as the quotient space, $q(K)$, consisting of equivalence classes of cells identified under the translation group acting on $K$.
We study how the Betti numbers and cycles of $K$ are related to those of $q(K)$, first for the case that $K$ is a graph, and then higher-dimensional cell complexes.
When $K$ is a $d$-periodic graph, it is possible to define $\Z^d$-weights on the edges of the quotient graph and this information permits full recovery of homology generators for $K$.
The situation for higher-dimensional cell complexes is more subtle and studied in detail using the Mayer-Vietoris spectral sequence.

\bigskip \noindent
\textbf{Keywords:} quotient graphs, quotient cell complexes, topological crystallography, computational homology, Mayer-Vietoris spectral sequence

\bigskip \noindent
\textbf{MSC(2020):} 57Z25 (primary), 55--08, 55N31, 55T99 (secondary) 

\end{abstract}


\section{Introduction}
\label{sec:Introduction}

Spatially periodic point patterns arise naturally as models of atomic positions in crystalline materials, and as a tractable way to simulate many interacting objects without the influence of boundary effects.   Although simulations using periodic boundary conditions treat points as located in a flat $d$-dimensional torus, the structure being modelled is really some large finite domain built from many copies of a unit cell and thus a subset of $\R^d$.  

Given the increasing usefulness of persistent homology in many application areas, particularly materials science, the following questions naturally arise.
\begin{enumerate}
\item Is the persistence diagram of an infinite crystalline structure well-defined, since the persistent homology of most crystalline structures will not be q-tame, which is usually a minimum requirement for the existence of persistence diagrams \cite{chazal2016structure}.
\item How to normalise the persistence diagram of an infinite crystalline structure so that it is independent of the unit cell used. 
\item How to approximate the persistence diagram for a large finite domain of a periodic point pattern in $\R^d$ given a periodic unit cell embedded in a flat $d$-torus.  
\end{enumerate}

Physical intuition from more familiar geometric properties suggests that we should be able to normalise the number of points in a persistence diagram by the volume of the domain in $\R^d$ to obtain a quantity that is independent of domain size.
This is exactly how we define the porosity of a material for example:  as the volume of solid matter normalised by the volume of the domain.  
However, this type of normalisation is appropriate only when working with what physicists term an \emph{extensive property}: a quantity $f$ that satisfies the properties of a \emph{valuation}, most notably the inclusion-exclusion formula $f(A\cup B)  = f(A) + f(B) - f( A \cap B)$.  
%
It is easy to show that the Betti number invariants of homology $\beta_k$, and consequently the persistence diagrams, are not valuations.  For example, two $\sqsubset$-shaped domains that overlap to form a square annulus have 
$\beta_k(\sqsubset) + \beta_k(\sqsupset) \neq \beta_k(\square) + \beta_k(^{-}_{-})$, for $k=0,1$.   

Given this fundamental obstruction to a simple normalisation procedure for persistence diagrams, the current paper begins the process of providing an answer to the above questions by studying the homology of periodic cell complexes.
We are particularly motivated by peculiar behaviour in the topology of finite presentations of these complexes, such as two disconnected interwoven graphs over cubical lattices projecting onto a single connected quotient graph as in Figure~\ref{fig:interwoven-lattices} below.
We present a detailed study of how the Betti numbers of a finite cell complex (the quotient space) are related to those of its infinite periodic cover, $K$. 

\begin{figure}[h]
\centering
\includegraphics[width=0.42\linewidth]{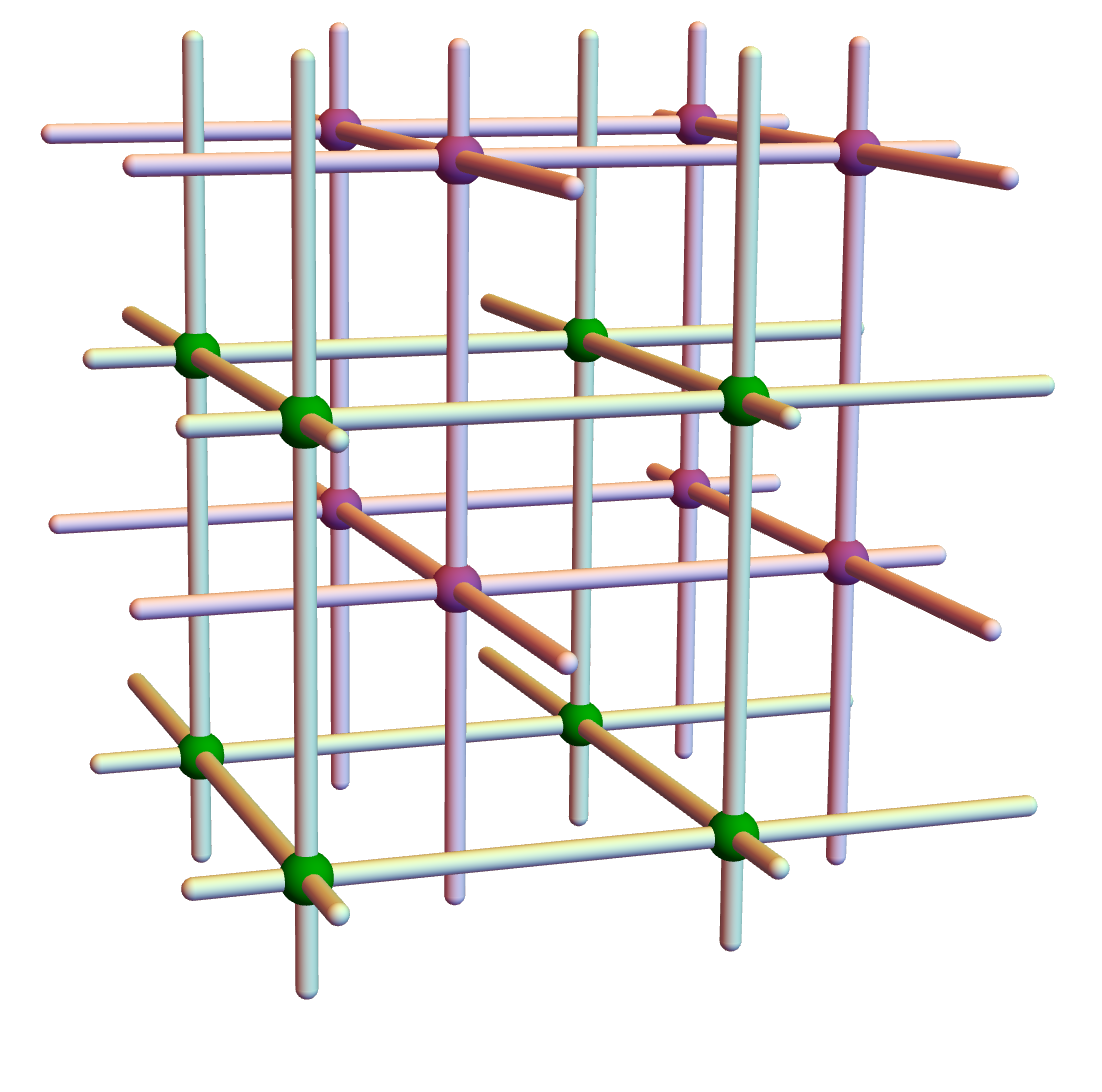}\hspace{0.4cm}
\begin{tikzpicture}
\node[shape=circle,fill,scale=0.3] (v) at (0,0) {};
\node[shape=circle,fill=white,scale=0.3] () at (-2,0) {};
\node[shape=circle,fill=white,scale=0.3] () at (0,-3.4) {};
\node[shape=circle,fill=white,scale=0.3] () at (0,2) {};
\node[shape=circle,fill=white,scale=0.3] () at (2,0) {};
\node[shape=circle,fill=white,scale=1] () at (1.2,-1.15) {$(0,1,1)$};
\node[shape=circle,fill=white,scale=1] () at (0,1.65) {$(1,0,1)$};
\node[shape=circle,fill=white,scale=1] () at (-1.3,-1.15) {$(1,1,0)$};
\draw[>=latex,->,line width=0.3mm] (v) to [out=360,in=300,looseness=105] (v);
\draw[>=latex,->,line width=0.3mm] (v) to [out=120,in=60,looseness=105] (v);
\draw[>=latex,->,line width=0.3mm] (v) to [out=240,in=180,looseness=105] (v);
\end{tikzpicture}
%
\caption{A section of two interwoven cubical lattices along integer and half integer coordinates in $\R^3$ (with connected components coloured green and purple) and its weighted quotient graph with respect to a basis of translations in the directions $t_1 = (-\frac12,\frac12,\frac12)$, $t_2 = (\frac12,-\frac12,\frac12)$ and $t_3 = (\frac12,\frac12,-\frac12)$.}
\label{fig:interwoven-lattices}
\end{figure}

The case of a periodic 1-dimensional cell-complex in $\R^d$ i.e., a \emph{periodic graph}, is considerably simpler than the general $k$-dimensional case, and has well studied representations \cite{delgado2005three,boyd2016generalized,eon2018planar}.
Results about the number of connected components ($\beta_0$) of periodic graphs can be found in computer science, electronics, crystallography and graph theory literature \cite{dicks1989groups,cohen1990recognizing,bass1993covering,sunada2012topological}.
We rephrase these results in Section~\ref{sec:Graphs} using the terminology of chain complexes and homology so that we can study the generalisation to higher dimensions.
In short, a periodic graph with undirected edges is mapped to the quotient space built from translational equivalence classes of vertices and edges (see Figure~\ref{fig:interwoven-lattices}). 
This finite quotient graph can be given $d$-dimensional vector weights that encode a translational offset between the vertex representatives at each end of the oriented edge. 
The number of components of the infinite periodic graph is then determined by comparing the span of the vector weights with the unit lattice basis for $\R^d$. 
 
In Section~\ref{sec:Complexes} we then look at $k$-dimensional periodic cell-complexes, $k\geq2$, and discuss how there is no simple method analogous to the vector weights on edges for encoding the translational offsets of boundaries of $k$-cells. 
With no simple generalisations of the formulae derived for periodic graphs, we have trouble distinguishing cycles in $K$ by studying cycles in its quotient.
We instead look at successively larger (but still periodic) finite domains, $X$, and establish a heuristic to identify \emph{toroidal}  cycles that are due to the periodic boundary conditions of $X$ and do not lift to cycles in the infinite structure $K$.
This involves application of the Mayer-Vietoris spectral sequence (MVSS), which we introduce in Section~\ref{sec:mvss}.
Using only local computations, the MVSS enables us to identify and classify true cycles of $K$ and impose an approximate lower bound for the size of $X$ required to view these features.

Our results are motivated by application to the analysis of crystal structures, where we have a periodic point cloud in space (i.e., atom positions) and a fixed cellular complex where edges represent bonds and higher dimensional cells represent higher order atomic interactions, or constant-potential surfaces.
Persistent homology is another natural tool for studying topological and geometric structure of  periodic point clouds. Although we focus on regular homology in this paper, we will note when a result can be adjusted to the case of persistent homology.  

\subsection{Related Work}
\label{sec:Related}



In Section~\ref{sec:Graphs} we define the notion of a \textit{weighted quotient graph}.
The definition we employ agrees with the ``vector method'' introduced in \cite{chung1984vector}, to which we direct the reader for more details.
In recent years, weighted quotient graphs have been studied extensively and become ubiquitous with structure classification in topological crystallography (c.f., \cite{delgado2005three,eon2018planar}), where they are known as \textit{labelled quotient graphs}).
A weighted quotient graph is also referred to as a \textit{static graph} in a discrete mathematical setting (c.f., \cite{iwano1987testing,cohen1990recognizing}) where it is used to model Very Large Scale Integration and dynamic optimisation problems (see \cite{kosaraju1988detecting,orlin1984problems}).
In this setting, the periodic graph is called the \textit{dynamic graph} constructed by interpreting edge weights as shift vectors which generate translational symmetries.

In Section~\ref{sec:Graphs} we present Theorems~\ref{thm:graphH0} and~\ref{thm:graphH1} which relate the homology of a periodic graph to properties of its weighted quotient graph.
The degree-0 result (Theorem~\ref{thm:graphH0}) was first derived by Cohen \& Megiddo in \cite{cohen1990recognizing} in the language of electronics and computer science and was independently reformulated in the language of graph theory in Corollary~1.9.3 of \cite{dicks1989groups} and Theorem~3.6 of \cite{bass1993covering} to prove an analogous result for any action on a connected graph. 
We rephrase and provide a proof using using the language of algebraic topology. 
Recent work in \cite{edelsbrunner2024merge} has also extended the approach with weights of weighted quotient graphs (which they call shift vectors) to define a degree-0 persistent homology theory for periodic spaces.
The approach of \cite{edelsbrunner2024merge}, which introduces the concept of a persistent merge tree, has a similar theme to the current paper, but is currently unable to extend to higher dimensional (persistent) homology.
Our treatment of the degree-1 homology of a periodic graph, Theorem~\ref{thm:graphH1}, uses covering space theory and builds on results from  Sunada's topological crystallography~\cite{sunada2012topological}. 

In the case of a 1-periodic cell complex $K$ (of arbitrary dimension) we may think of the map $K\to q(K)$ sending $K$ to its quotient space $q(K)$ of translational equivalence classes as being equivalent to a map $K\to \mathbb{S}^1$.
This case has been well-studied with Novikov homology \cite{novikov1991quasiperiodic} which generalises the methods of Morse theory, allowing one to explicitly calculate the homology of $K$ over the field of Laurent power series. A computer-friendly method to calculate this with Jordan blocks is described in \cite{burghelea2017topology}.
More generally, the (co)homology of the quotient space is exactly the equivariant (co)homology of $K$ with respect to the group action of translations \cite{tu2020introductory}. Recent work has looked at applications of (persistent) equivariant (co)homology to shape reconstruction and Vietoris-Rips complexes \cite{carbone2020equivariant,adams2024persistent}.

The Mayer-Vietoris spectral sequence (MVSS) was first used in topological data analysis to localise low-dimensional cycles of a topological space \cite{zomorodian2008localized}.
More recently, it has been presented primarily as a method for parallelising persistent homology calculations of large data sets (c.f. \cite{lipsky2011parallelized,boltcheva2010constructive}) and has been implemented for abstract simplicial complexes \cite{lewis2014multicore}.
In \cite{govc2018approximate} the authors use a persistence version of the MVSS to prove an approximate nerve theorem for persistence diagrams.
Computer implementations of a persistence Mayer-Vietoris spectral sequence can be found in \cite{lewis2015parallel,casas2019distributing}, although these encounter what \cite{govc2018approximate} refer to as the ``extension problem'', which we briefly discuss in Section~\ref{sec:mvss-def}.
It is also well-known that persistent homology can be calculated through the spectral sequence of a filtration (c.f. \cite{bauer2014clear,basu2017spectral}).
However, none of these applications have yet been used to study periodic spaces.


\section{Background and Notation}
\label{sec:Definitions}

This section covers basic definitions and sets up notation for the objects studied in this paper. 

\subsection{Periodic Spaces and Unit Cells}

A $d$-periodic complex, $K \subset \R^l$ is a cell complex which permits a free action by a free abelian group $T$ of rank $d\leq l$, so that $T$ has a basis of $d$ automorphisms $\mathbf{t}_1,\dots,\mathbf{t}_d:K\to K$. 
Throughout this paper, we assume $d>0$, $K$ is locally finite so contains countably many cells, and that $T$ is a cellular action on $K$ by translations, so that the $\mathbf{t}_i$ are geometrically realised as $d$ linearly independent translations in $\R^l$.
 
A \textit{unit cell}, $U\subset K$, contains a single representative of each equivalence class of cells in $K/T$, where $K/T:=K/\sim$ for the equivalence relation $a\sim b$ if $b=\mathbf{t}(a)$ for some $\mathbf{t}\in T$.
A \textit{fundamental domain} for $T$ is a convex subset $D\subset\R^l$  such that the projection $D\to \R^l/T$ is a bijection.
It is always possible to choose $U$ so that the vertex representatives of $K/T$ belong to a fundamental domain $D$ for $T$, and higher-dimensional cell representatives have at least one vertex in $D$. We call such a $U$ a \textit{fundamental unit cell}.
Note that $U$ is in general not a subcomplex of $K$, however $\{\mathbf{t}(U)\,:\,\mathbf{t}\in T\}$ partitions $K$.
We call the smallest subcomplex of $K$ containing $U$ the \textit{closure} of $U$ and denote it by $\overline{U}$.

We say $X$ is constructed from $n_1\times\cdots\times n_d$ copies of the quotient space of $K$ \textit{with periodic boundary conditions} if $X=K/\widetilde{T}$ for the subgroup $\widetilde{T}=\langle n_1\mathbf{t}_1,\dots,n_d\mathbf{t}_d\rangle$ of $T$.
We say $Y$ is constructed from $n_1\times\cdots\times n_d$ copies of the quotient space of $K$, if $Y$ is the closure of a fundamental unit cell for $X$.

\subsection{Homology and the Fundamental Group}

For a group $G$ we denote that $N$ is a subgroup of $G$ by $N<G$ or $N\leq G$.
For a normal subgroup $N\leq G$, recall that the index of $N$ in $G$ is the cardinality of the quotient group $G/N$ and we denote this by $[G:N]$.

Given a cell complex $K$, $\pi_1(K,v)$ denotes the fundamental group of $K$ with basepoint at $v$.
If the choice of basepoint is understood or is arbitrary (up to connected component), then we will write $\pi_1(K)$ for convenience.
The commutator subgroup $[\pi_1(K), \pi_1(K)]$ is defined as the group \emph{generated} by elements of the form $[a,b] = aba^{-1}b^{-1}$.  It is straightforward to show that this subgroup is normal in $\pi_1(K)$ and the quotient group $\pi_1(K)/ [\pi_1(K), \pi_1(K)]$ is called its abelianization.

Now recall  that the chain complex of $K$, $C_\bullet(K,G)$, is the differential $\Z$-graded module where $C_k(K,G)$ is the free $G$-module whose basis is the oriented $k$-cells of $K$ and whose boundary map $\D$ sends a $k$-cell to the oriented sum of its boundary $(k-1)$-cells.
If $K$ has infinitely many $k$-cells, $C_k(K,G)$ contains only finite oriented $G$-sums of $k$-cells.
We will write $C_\bullet(K,G)=C_\bullet(K)$ whenever $G$ is understood; in this paper we typically have $G = \Z$. 
We denote by $Z_\bullet(K):=\ker(\D)$ the \textit{cycles} of $K$ and by $B_\bullet(K):=\im(\D)$ the \textit{boundaries} of $K$.
The fundamental result of homology, $\D \D = 0$, tells us that $B_{\bullet}(K)< Z_\bullet(K)$, and the \textit{cellular homology} of $K$ is then  $H_\bullet(K):=Z_\bullet(K)/B_\bullet(K)$.
The $k^\mathrm{th}$ \textit{Betti number} of $K$ (with respect to $G$) is the rank of the free part of $H_k(K)$.

In proving the results of Section~\ref{ssec:cycles} we will need the following standard result from algebraic topology. 
\begin{theorem} [ \cite{HatcherAT}, p.166 ] \label{thm:fundgrp-H1}
Given a path-connected space $X$, the degree-1 homology group is isomorphic to the abelianization of the fundamental group: $H_1(X,\Z) \simeq \pi_1(X)/[\pi_1(X),\pi_1(X) ]$. 
\end{theorem}

\subsection{Covering spaces} \label{ssec:Covering}

Given a topological space $X_0$,  a \emph{covering space} is another space $X$ together with a continuous map $\omega: X \to X_0$ such that there is an open cover $\{U_\alpha\}$ of $X_0$ with $\omega^{-1}(U_\alpha)$ a disjoint union of open sets in $X$ for each $\alpha$, and $\omega$ restricted to each of these open sets is a homeomorphism. 

Essential to our understanding of periodic cell complexes and their quotient spaces is the Galois correspondence between subgroups of $\pi_1(X_0,x_0)$ and path-connected covering spaces of $X_0$.  The following result holds when $X_0$ is path-connected, locally path-connected and semilocally simply-connected. In particular a connected, locally finite cell complex satisfies these conditions.  
\begin{theorem} [\cite{HatcherAT}, p.67 ]\label{thm:coveringspaces}
Let $X_0$ be a connected finite cell complex with fundamental group $\pi_1(X_0,x_0)$. Then the subgroups $N < \pi_1(X_0,x_0)$ are in bijection with isomorphism classes of base-point preserving path-connected covering spaces $\omega: (X ,\tilde{x}_0) \to (X_0,x_0)$, through the correspondence $N = \omega_{*} (\pi_1(X,\tilde{x}_0))$.
Moreover, if $N' < N < \pi_1(X_0,x_0)$, then the associated covering spaces satisfy $\omega': (X',\tilde{x}_0') \to (X,\tilde{x}_0)$.
In particular when $N'$ is the trivial group, the associated covering space is simply connected and called the universal covering space of $X_0$. 
\end{theorem}

A covering space $\omega: X \to X_0$ is said to be a \emph{regular} or {normal} covering space if for each $x \in X_0$ and each pair of points $\tilde{x}, \tilde{x}' \in \omega^{-1}(x)$, there is an automorphism $\alpha: X \to X$ with $ \omega \circ \alpha = \omega$,  and $\alpha(\tilde{x}) = \tilde{x}'$.  The collection of transformations preserving the covering space form a group action on $X$ called the \emph{deck transformations}, $\mathcal{D}(X)$.  When $X_0$ and $X$ are path-connected, a covering $\omega: X \to X_0$ is normal if and only if $\omega_*(\pi_1(X,\tilde{x}_0))$ is a normal subgroup of $\pi_1(X_0,x_0)$.   More generally, we have the following result. 
\begin{theorem}  [\cite{HatcherAT}, p.72 ]\label{thm:groupaction}
Suppose we have $G \subset \text{Homeo}(Y)$ with the following property: for every point $y \in Y$, there is a neighbourhood $y \in U \subset Y$ such that for $g_1, g_2 \in G$,  $g_1 (U) \cap g_2(U) \neq \emptyset$ implies $g_1 = g_2$.  Then: 
\begin{enumerate} 
 \item The quotient map $\omega: Y\to Y/G$,  $\omega(y) = Gy$, is a normal covering space.
 \item $G$ is the group of deck transformations of this covering space $Y \to Y/G$ if $Y$ is path-connected.
 \item  $G$ is isomorphic to $ \pi_1(Y /G) / \omega_*(\pi_1(Y ) )$ if $Y$ is path-connected and locally path-connected.
\end{enumerate}
\end{theorem}

\subsection{Topological Crystallography}\label{ssec:TopoCryst}

In this section we briefly summarise results about connected covering spaces of finite connected graphs developed by Sunada in his book `Topological Crystallography'~\cite{sunada2012topological}.

Consider the case that  $X_0$ is a finite connected graph with $\nu$ vertices and $\ep$ edges, we set $g = \ep - \nu +1$ (this is in fact the degree-1 Betti number of $X_0$). 
Proposition 1A.2 of \cite[p.84]{HatcherAT} establishes that $\pi_1(X_0, x_0)$ is a free group with $g$ generators.  
The universal covering space of $X_0$ is a tree, $X^{uni}$, with $\pi_1(X^{uni})$ trivial and deck transformation group $\mathcal{D}(X^{uni}) \simeq \pi_1(X_0,x_0)$.

Applying Theorem~\ref{thm:coveringspaces} to this case, we see that the commutator subgroup $[\pi_1(X_0), \pi_1(X_0)]$ is associated with a path-connected covering space $\omega^{ab}: X^{ab} \to X_0$ whose fundamental group is isomorphic to $[\pi_1(X_0), \pi_1(X_0)]$, and deck transformation group $\mathcal{D}(X^{ab}) \simeq \pi_1(X_0)/[\pi_1(X_0), \pi_1(X_0)] \simeq H_1(X_0)$, which is a \emph{free abelian group} with $g$ generators. 
Sunada calls the covering space $X^{ab}$ the \emph{maximal abelian covering graph} for $X_0$.  Any other covering space $\omega: X \to X_0$ with an abelian deck transformation group is called an \emph{abelian cover} of $X_0$. 

Recall now that for any group $G$, and normal subgroup $H < G$, the quotient $G/H$ is abelian if and only if $H$ contains the commutator subgroup of $G$.  Applying  Theorem~\ref{thm:coveringspaces} to this situation, we see that if $\omega: X \to X_0$ is any covering space such that $\mathcal{D}(X)$ is abelian, then $X^{ab}$ also covers $X$.  That is, there exists a covering map $\omega_1: X^{ab} \to X$ such that $\omega \circ \omega_1 = \omega^{ab}$.  This is the sense in which $X^{ab}$ is `maximal'. 

As Sunada shows in \cite[Ch.6.1]{sunada2012topological}, we can associate a subgroup $H < H_1(X_0,\Z)$ with an abelian cover of $X_0$, by applying the same underlying principles as in Theorems~\ref{thm:coveringspaces} and Theorem~\ref{thm:groupaction}.  In particular,  covering spaces of $X_0$ induce group homomorphisms between the deck transformation groups for $X^{uni}$, $X^{ab}$, and an arbitrary abelian cover $X$ as per the following diagram. 
\[
  \begin{tikzcd}
   \mathcal{D}(X^{uni}) \simeq  \pi_1(X_0) \arrow{r}{\varphi} \arrow[swap]{d}{h} & \mathcal{D}(X)  \\ 
      \mathcal{D}(X^{ab}) \simeq  H_1(X_0) \arrow[swap]{ur}{\mu} & 
  \end{tikzcd}
\]
In the above diagram, $\varphi : \mathcal{D}(X^{uni}) \to \mathcal{D}(X)$  is defined using the subgroup $N < \pi_1(X_0)$ associated to $X$ by Theorem~\ref{thm:coveringspaces}, and mapping the automorphism $\alpha \in \mathcal{D}(X^{uni})$ to the coset $\varphi(\alpha) = \alpha + N$.  Since $\mathcal{D}(X)$ is abelian, we know $[\pi_1(X_0), \pi_1(X_0)] < N$. 
The homomorphisms above are all surjective so we see that $\mathcal{D}(X) \simeq H_1(X_0)/ \ker{\mu}$, and $X$ is now associated to $H = \ker{\mu} < H_1(X_0)$. 
Conversely, given a subgroup $H < H_1(X_0,\Z) \simeq \Z^g$, an element, $a \in H$ corresponds to an automorphism $\alpha_a \in \mathcal{D}(X^{ab})$. The quotient space $X = X^{ab}/H$, therefore has $ \mathcal{D}(X) = H_1(X_0,\Z)/ H $.   

Lastly, the following theorem will be particularly useful in our study of periodic graphs. 
\begin{theorem} [\cite{sunada2012topological}, p.78]\label{thm:coveringhomology}
Let $\omega: X \to X_0$ be an abelian covering space for the connected finite graph $X_0$, and $\omega_*$ be the corresponding homomorphism of homology groups.  Then $\omega_*( H_1(X, \Z)) = \ker \mu < H_1(X_0,\Z)$, where $\mu$ is the map defined in the above commuting diagram. 
\end{theorem}
 
Note that in \cite{sunada2012topological}, Sunada shows how to construct geometric realisations of abelian covering graphs from a finite graph by defining a \emph{building block} for $X_0$.  This is a mapping from the edges in $X_0$ to a set of vectors in $\R^d$, akin to the weighted quotient graph we define below. Our approach is different as we start with a periodic graph $K$ and a given lattice group action on it, then study what information about $K$ can be derived from the weighted quotient graph. 
 



\section{Periodic Graphs}
\label{sec:Graphs}

In this section, $K$ denotes a $d$-periodic cellular complex with only $0$- and $1$-dimensional cells (i.e., a graph) immersed in $\R^l$.
We assume a group $T\cong \Z^d$ acts freely on $K$ by translations and preserves the cell structure of $K$. 
We adjust notation slightly from the previous section and write  $q(K) = K/T$ for the quotient space of orbits under $T$, with $q: K \to q(K)$ denoting the normal covering space.  If $K$ is path-connected then Theorem~\ref{thm:groupaction} guarantees that $T$ is the deck-transformation group for $q: K \to q(K)$.   However, we will not assume that $K$ is connected in this section unless specified.

\subsection{Weighted Quotient Graphs} \label{ssec:WQG}

We start by defining the weight of an edge in $q(K)$ as the translation offset between the end point representatives of any edge in its lift to $K$.

\begin{definition} \label{defn:WQG}
\emph{Weighted quotient graph (WQG)}.
For each edge $e \in q(K)$, we define a weight $w(e) \in T$ as follows. 
\begin{enumerate}
\item Enumerate the vertices of $q(K)$ as $\{v_1,\ldots,v_{\nu}\}$.  When $i\leq j$, any edge joining $v_i$ and $v_j$ in $q(K)$ is given the direction pointing from $v_i$ to $v_j$. 
\item For each vertex $v_i\in q(K)$ choose a fixed representative $\tilde{v}_i\in q^{-1}(v_i)$.  It is convenient, but not essential to choose these representatives in a fundamental domain for $T$. Then each vertex $a \in K$ can be written uniquely as $a = \mathbf{t}_a(\tilde{v}_i)$ for some $i$ and $\mathbf{t}_a \in T$.  
\item Each edge $(a,b) \in K$ therefore joins $\mathbf{t}_a(\tilde{v}_i)$ and $\mathbf{t}_b(\tilde{v}_j)$. Swapping the order of $a$ and $b$ if necessary we  assume $i\leq j$.
Now define $w(e) = w(q[(a,b)]) = \mathbf{t}_b - \mathbf{t}_a$.
\end{enumerate}
Observe that $w(q[(a,b)]) = \mathbf{t}_b - \mathbf{t}_a$ is independent of the choice of representative in its fiber because any other representative is a translated copy and the end points maintain the same relative offset.  
Given a basis for $T$, we can write $w(e)$ as an integer vector of coefficients in $\mathbb{Z}^d$. 
\end{definition}

WQG's are not uniquely determined by $K$, as $q(K)$ depends on the choice of $T$ (which is not necessarily maximal) and the quotient graph edge weights depend on the labelling and choice of vertex representatives $\tilde{v}_i \in K$.
However, for any WQG we can extend the domain of $w$ to directed paths of edges in $q(K)$. 
Explicitly, for a path $p$ along the directed edges $e_1,\dots,e_m$ we define $w(p)=\sum_{i=1}^m w(e_i)$ and $w(p^{-1})=-w(p)$ (where $p^{-1}$ denotes the reverse of the path $p$ in $q(K)$).
This means $w$ takes the same value on homotopy-equivalent edge-paths
and it restricts to a group homomorphism $w:\pi_1(q(K),v) \to T$ for any basepoint $v$. 

\begin{definition}\label{defn:Qwts}
Let $Q$ be a \emph{connected} WQG with weights in a given  group $T \cong \Z^d$. 
We set $W_Q$ to be the subgroup of $T$ containing weights of all cycles in $Q$,
\[
W_Q := \langle w(\ell)\,:\,\ell\text{ is a loop in }Q\rangle
\]
\end{definition}
The subgroup $W_Q$ is independent of choice of basepoint, so we see that 
\[
W_Q = \im\left(w:\pi_1(Q) \to T\right).
\]

\begin{lemma}\label{thm:zeroweightloops}
Let $K$ be a connected $d$-periodic graph with WQG $q(K) = K/T$, and weight function $w: \pi_1(q(K),v) \to T$.  
Then $\ker w = q_*( \pi_1(K,\tilde{v}) )$. 
\end{lemma}
\begin{proof}
By Theorem~\ref{thm:groupaction}, $T$ is the deck transformation group for the covering $q: K \to q(K)$.  
This means that a loop $\gamma \in \pi_1(q(K), v)$ lifts to a unique path $\tilde{\gamma}$ in $K$ that starts at $\tilde{v}$ and ends at  $\mathbf{t}_{\gamma}(\tilde{v})$.  By the construction of the weight function, we have that $\mathbf{t}_{\gamma} = w(\gamma)$.  
So $w(\gamma) = 0$ implies $\gamma$ lifts to a loop in $\pi_1(K,\tilde{v})$.  On the other hand, any loop $\tilde{\gamma} \in  \pi_1(K,\tilde{v})$ has a unique image $q(\tilde{\gamma}) \in \pi_1(q(K),v)$.  Again by construction of the weight function, we see that $w(q(\tilde{\gamma})) = 0$. 
\end{proof}

\begin{figure}[ht]
\centering
\scalebox{0.8}{
\begin{tikzpicture}
    \node[shape=circle] (v1) at (6,-1.3) {$v_1$};
    \node[shape=circle] (v2) at (7.5,1.3) {$v_2$};
    \node[shape=circle] (v3) at (9,-1.3) {$v_3$};

    \node[shape=circle,scale=0.8] (00v1) at (0,0) {$\tilde{v}_1$};
    \node[shape=circle,fill,scale=0.3] (0p1v1) at (1.2,2.2) {};
    \node[shape=circle,fill,scale=0.3] (p10v1) at (2.4,0) {};
    \node[shape=circle,fill,scale=0.3] (m1p1v1) at (-1.2,2.2) {};
    \node[shape=circle,fill,scale=0.3] (0m1v1) at (-1.2,-2.2) {};
    \node[shape=circle,fill,scale=0.3] (p1m1v1) at (1.2,-2.2) {};
    \node[shape=circle,fill=white] (p1p1v1) at (3.6,2.2) {};
    \node[shape=circle,fill=white] (p2m1v1) at (3.6,-2.2) {};
    \node[shape=circle,fill=white] (m10v1) at (-2.4,0) {};
    
    \node[shape=circle,scale=0.8] (00v2) at (0.6,1.1) {$\tilde{v}_2$};
    \node[shape=circle,fill,scale=0.3] (p10v2) at (3,1.1) {};
    \node[shape=circle,fill,scale=0.3] (m10v2) at (-1.8,1.1) {};
    \node[shape=circle,fill,scale=0.3] (0m1v2) at (-0.6,-1.1) {};
    \node[shape=circle,fill,scale=0.3] (p1m1v2) at (1.8,-1.1) {};
    \node[shape=circle,fill=white] (0m2v2) at (-1.8,-3.3) {};
    \node[shape=circle,fill=white] (p1m2v2) at (0.6,-3.3) {};
    \node[shape=circle,fill=white] (p2m2v2) at (3,-3.3) {};
    \node[shape=circle,fill=white] (m1p1v2) at (-0.6,3.3) {};
    \node[shape=circle,fill=white] (0p1v2) at (1.8,3.3) {};
    
    \node[shape=circle,scale=0.8] (00v3) at (1.2,0) {$\tilde{v}_3$};
    \node[shape=circle,fill,scale=0.3] (m10v3) at (-1.2,0) {};
    \node[shape=circle,fill,scale=0.3] (0p1v3) at (2.4,2.2) {};
    \node[shape=circle,fill,scale=0.3] (m1p1v3) at (0,2.2) {};
    \node[shape=circle,fill,scale=0.3] (0m1v3) at (0,-2.2) {};
    \node[shape=circle,fill,scale=0.3] (p1m1v3) at (2.4,-2.2) {};
    \node[shape=circle,fill=white] (m2p1v3) at (-2.4,2.2) {};
    \node[shape=circle,fill=white] (m1m1v3) at (-2.4,-2.2) {};
    \node[shape=circle,fill=white] (p10v3) at (3.6,0) {};
    
    \node[shape=circle,fill=blue,inner sep=0pt,minimum size=0.1cm] (arrowbase) at (-0.4,-0.2) {};
    \node[shape=circle,fill=white] (arrowup) at (0.8,2.0) {};
    \node[shape=circle,fill=white] (arrowright) at (2.0,-0.2) {};

    \path[every node/.style={font=\sffamily\small}]
    	(v1) edge[>=latex,->] node[left] {\scriptsize $(0,0)$} (v2)
    	(v1) edge[>=latex,bend left = 45,->] node[left] {\small $(0,-1)$} (v2)
    	(v1) edge[>=latex,->] node[above] {\scriptsize $(0,0)$} (v3)
    	(v1) edge[>=latex,bend left = -45,->] node[above] {\small $(-1,0)$} (v3)
    	(v2) edge[>=latex,->] node[right] {\scriptsize $(0,0)$} (v3)
    	(v2) edge[>=latex,bend left = 45,->] node[right] {\small $(-1,1)$} (v3)
    	
		(arrowbase) edge[>=latex,->,blue] node[below] {\small $(1,0)$} (arrowright)
		(arrowbase) edge[>=latex,->,blue] node[left] {\small $(0,1)$} (arrowup)    	
    	
    	(00v1) edge node {} (00v2)
    		edge node {} (00v3)
    		edge node {} (m10v3)
    		edge node {} (0m1v2)
    	(00v2) edge node {} (00v3)
    		edge node {} (m1p1v3)
    		edge node {} (0p1v1)
    	(00v3) edge node {} (p10v1)
    	    edge node {} (p1m1v2)
    	(0p1v1) edge node {} (0p1v2)
    		edge node {} (0p1v3)
    		edge node {} (m1p1v3)
    	(p10v2) edge node {} (p10v1)
    		edge node {} (p10v3)
    		edge node {} (0p1v3)
    		edge node {} (p1p1v1)
    	(0p1v3) edge node {} (0p1v2)
    	    edge node {} (p1p1v1)
    	(p10v1) edge node {} (p10v3)
    	    edge node {} (p1m1v2)
    	(m1p1v3) edge node {} (m1p1v2)
    	    edge node {} (m1p1v1)
    	(p1m1v3) edge node {} (p2m1v1)
    		edge node {} (p2m2v2)
    		edge node {} (p1m1v2)
    		edge node {} (p1m1v1)
    	(p1m1v1) edge node {} (p1m1v2)
    		edge node {} (p1m2v2)
    		edge node {} (0m1v3)
    	(m1p1v1) edge node {} (m1p1v2)
    		edge node {} (m2p1v3)
    		edge node {} (m10v2)
    	(m10v2) edge node {} (m2p1v3)
    		edge node {} (m10v3)
    		edge node {} (m10v1)
    	(0m1v1) edge node {} (0m1v3)
    		edge node {} (m1m1v3)
    		edge node {} (0m1v2)
    		edge node {} (0m2v2)
    	(0m1v3) edge node {} (p1m2v2)
    	    edge node {} (0m1v2)
    	(m10v3) edge node {} (0m1v2)
    	    edge node {} (m10v1);
\end{tikzpicture}
}
\caption{Left: A section of the Kagome pattern with translational basis for $T$ shown in blue. Right: A choice of weighted quotient graph with edge weights given with respect to $T$ and the given choice of vertex representatives. }
\label{fig:kagomeWQG}
\end{figure}

In Figure~\ref{fig:kagomeWQG} we illustrate a 2-periodic graph known as the Kagome structure, $K_G$ and a choice of weighted quotient graph, $q(K_G)$. 
The WQG has three vertices and six edges, so its fundamental group is generated by four loops. 
Define $e_{ij}$ and $f_{ij}$ to be the edges between $v_i$ and $v_j$ of zero and non-zero weight respectively in the weighted quotient graph $q(K_G)$.
A choice of four loops\footnote{We use the convention that $\alpha\beta$ reads ``$\alpha$ followed by $\beta$''}  that generate  $\pi_1(q(K_G))$ is then 
\[ \ell_1 = e_{12} e_{23} e_{13}^{-1}, \; \;
\ell_2 = f_{12} f_{23} f_{13}^{-1}, \;\;
\ell_3 = e_{12} f_{12}^{-1},\;\;
\ell_4 = e_{13} f_{13}^{-1} \] 
and we see that $W_q(K_G) = \langle \, (0,0), (0,0), (1,0), (0,1) \,\rangle = T $.

\subsection{Counting components} \label{ssec:components}


Connectivity of $q(K)$ does not guarantee that $K$ is connected, as the example of Fig.~\ref{fig:interwoven-lattices} shows.  
The following result was first introduced in \cite{cohen1990recognizing} and establishes that the WQG contains enough information to determine the number of path-connected components of $K$.

\begin{theorem} \label{thm:graphH0}
Suppose $q(K) = K/T$ is a WQG for a $d$-periodic graph $K$ with lattice group $T$ and that $q(K)$ has $N \geq 1$ disjoint components labelled $Q_i$.  
Then $H_0(K)$ has a basis of $\sum_{i=1}^N\,[T:W_{Q_i}]$ elements.  
Moreover, the equivalence classes of $H_0(K)$ are independent of the choice of WQG for $K$.
\end{theorem}

\begin{proof} 
Disconnected subgraphs of $q(K)$ must have disconnected lifts, so we begin by assuming $q(K)$ is connected, i.e., $N=1$.
Label the vertices of $q(K)$ by $v_1,\dots,v_\nu$ and their representatives in $K$ by $\tilde{v}_1,\dots,\tilde{v}_\nu$.
Since $q(K)$ is connected, each vertex $v_2,\dots,v_\nu$ is homologous to $v_1$.
Also, $q: K\to q(K)$ is a normal covering space, meaning any path in $q(K)$ has a unique lift up to the choice of basepoint, so each vertex of $q^{-1}(\{v_2,\dots,v_\nu\})$ is homologous to some vertex of $q^{-1}(v_1)$.
Furthermore, if there is a cycle of weight $\mathbf{t}\in T$ based at $v_1$ then for any $\mathbf{s}\in T$ this lifts to a unique path from $\mathbf{s}(\tilde{v}_1)$ to $(\mathbf{t}+\mathbf{s})(\tilde{v}_1)$.
Conversely, by translational symmetry of $K$, any path from $\tilde{v}_1$ to $\mathbf{t}(\tilde{v}_1)$ corresponds to a unique path from $\mathbf{s}(\tilde{v}_1)$ to $(\mathbf{t}+\mathbf{s})(\tilde{v}_1)$ and projects onto a cycle of weight $\mathbf{t}$ in $q(K)$.
Hence $\mathbf{t}(\tilde{v}_1)$ and $\mathbf{s}(\tilde{v}_1)$ are homologous if and only if $\mathbf{s}+W_{q(K)}=\mathbf{t}+W_{q(K)}$ as cosets.
Thus each generator of $H_0(K)$ is represented by  $\tilde{v}_1$, offset by a translation in $T/W_{q(K)}$.  

In the case that $N\geq 2$, we apply the above  to each connected component of $q(K)$ to obtain the stated result. 

Finally, the property of two vertices being homologous in $K$ is independent of the choice of $T$ and labelling of vertices in $q(K) = K/T$, and it follows that the number of homology classes in $H_0(K)$ must be the same irrespective of this choice. 
\end{proof}

\subsection{Counting cycles} \label{ssec:cycles}

We start this section by assuming that $K$ is a connected $d$-periodic graph and the group of translations $T \cong \Z^d$ acting on $K$ satisfies the conditions of Theorem~\ref{thm:groupaction}.  
Let $q: K \to  K/T = q(K)$ denote the quotient map and  $q_* :  \pi_1(K,\tilde{v}_1) \to \pi_1(q(K),v_1)$ the (injective) induced map on fundamental groups for some (arbitrary but fixed) choice of vertex $v_1 \in q(K)$ and $\tilde{v}_1 \in q^{-1}(v)$. 
Theorem~\ref{thm:groupaction} shows that $q$ is a normal covering space, that $T$ is the deck transformation group, and $T \cong \pi_1(q(K)) / q_*(\pi_1(K)) $.  
Since $T$ is abelian, it also follows that $[\pi_1(q(K)), \pi_1(q(K)) ] < q_*(\pi_1(K)) $. 
Our goal is to construct a generating set for the 1-cycles of $K$ `up to translation', although this construction will not necessarily lead to a \textit{minimal} generating set.

First, we clarify what it means for two loops in $\pi_1(K, \tilde{v})$ to be equivalent up to translation.  
Let $\gamma, \gamma' \in \pi_1(K, \tilde{v}_1)$,  $\mathbf{t} \in T$. 
Then these two loops are equivalent up to $\mathbf{t}$ if there is a path $p_{\mathbf{t}}$, of edges in $K$ from $\tilde{v}_1$ to $\mathbf{t}(\tilde{v}_1)$ such that  $\gamma' = p_{\mathbf{t}} \mathbf{t}(\gamma) p_{\mathbf{t}}^{-1}$. 
When we make the natural identification of loops in  $\pi_1(K, \tilde{v}_1)$ with cycles in $H_1(K)$, we have $[\gamma'] = [\mathbf{t}(\gamma)]$. 

The following result treats the case that $K$ is the maximal abelian covering space for $q(K)$.  

\begin{proposition}\label{thm:maxabcover}
Suppose $K$ is a connected $d$-periodic graph with translation group $T \cong \Z^d$ and quotient $q(K) = K/T$.  If $q(K)$ has $\ep$ edges and $\nu$ vertices with $\ep - \nu + 1 = d$, then $K$ is the  maximal abelian covering graph for $q(K)$.  Moreover, if $\ell_1, \ldots, \ell_d$ are generators for the fundamental group $\pi_1(q(K),v_1)$, then $\pi_1(K,\tilde{v}_1)$ is generated by lifts of paths with the form $p [ \ell_i, \ell_j ] p^{-1}$, where $p \in \pi_1(q(K),v_1)$ 
and $[ \ell_i, \ell_j ]$ is the commutator $\ell_i \ell_j \ell_i^{-1} \ell_j^{-1}$.  
\end{proposition}
\begin{proof}
As discussed in Section~\ref{ssec:Covering}, the maximal abelian covering graph has deck transformation group isomorphic to $H_1(q(K)) \cong \pi_1(q(K))/ [  \pi_1(q(K)),  \pi_1(q(K)) ] \cong \Z^d$.  
By Theorem~\ref{thm:coveringspaces}, we also have that $\pi_1(K) \cong [ \pi_1(q(K)),  \pi_1(q(K)) ] $. 
If $d = 1$, then $\pi_1(q(K)) \cong \Z$ is generated by a single loop $\ell_1$, and is abelian.  It follows that  $\pi_1(K)$ is a trivial group.  

We now assume $d\geq 2$ and consider loops in $\pi_1(q(K))$ with the form $p [ \ell_i, \ell_j ] p^{-1}$.
Note that 
\[
p [ \ell_i, \ell_j ] p^{-1} = [p,\ell_i][\ell_i,p\ell_j] \in [ \pi_1(q(K)),  \pi_1(q(K)) ].
\]
We next show that for any loops $a, b \in \pi_1(q(K))$ that $[a, b]$ takes the appropriate form by applying induction on the length of the words for $a, b$ expressed in the generators $\ell_i$. 
Clearly, when both $a, b$  have length = 1, the result holds.  So now assume that $[a, b] = z_1 z_2 \cdots z_n$ where each $z_i$ has the form $p_i [ \ell_{i_1}, \ell_{i_2}] p_i^{-1}$ for all words $a, b$ of length $n\leq m$ and consider the commutator $[ a \ell_j , b]$.  We show this is identical to the product $a [ \ell_j, b] a^{-1} [a, b]$, which has the correct form by the inductive hypothesis.  So: 
\begin{align*}
 a [ \ell_j, b] a^{-1} [a, b] & =  a \ell_j b \ell_j^{-1} b^{-1} a^{-1} a b a^{-1} b^{-1} =  a \ell_j b \ell_j^{-1} a^{-1} b^{-1} \\
 	& = a \ell_j b (a \ell_j)^{-1}  b^{-1}  = [ a \ell_j , b] . 
\end{align*}
Similarly, $[ a , b\ell_k] = [a,b] b [a, \ell_k] b^{-1}$, and we deduce that $[a', b']$ can be expressed in the desired form for all words of length at most $m+1$.  
\end{proof}

\begin{corollary}\label{thm:graphH1-maxab}
Suppose $K$ is a connected $d$-periodic graph with translation group $T \cong \Z^d$ and quotient $q(K) = K/T$.  If $q(K)$ has $\ep$ edges and $\nu$ vertices with $\ep - \nu + 1 = d \geq 2$, then the homology group $H_1(K)$ is generated by $\binom{d}{2}$ cycles `up to translation'.  If $d=1$, then $H_1(K)$ is trivial.  
\end{corollary}

\begin{proof}
The group $H_1(K)$ is the free abelian group with same generators as $\pi_1(K,\tilde{v}_1)$.  By the above Proposition~\ref{thm:maxabcover}, $\pi_1(K,\tilde{v})$ is generated by lifts of loops of the form $p [\ell_i, \ell_j] p^{-1}$ (when $d \geq 2$).  This means the loop in $K$ starts at $\tilde{v}_1$, follows the lifts of edges of $p$ to the vertex $\mathbf{t}_p(\tilde{v}_1)$, then traces the commutator loop  $[\ell_i, \ell_j]$, which returns to $\mathbf{t}_p(\tilde{v}_1)$, before tracing the edges of $p$ in reverse to finish at $\tilde{v}_1$.  In $H_1(K)$, the corresponding cycle consists of the edges in the commutator loop, all shifted by $\mathbf{t}_p$.  Since there are $d$-choose-2 commutator cycles, we have our result.   
\end{proof}

Next we consider the case that $K$ is a connected $d$-periodic graph and $q(K)$ has $\ep - \nu +1 = g $ cycles with $g > d$. 
Let $w: q(K) \to T$ denote the weight function, and its extension to paths and loops of $q(K)$. 
Theorem~\ref{thm:coveringspaces} tells us that $[ \pi_1(q(K)),\pi_1(q(K)) ] < q_*(\pi_1(K)) = \ker w$.  
But there are additional zero-weight loops in $\pi_1(q(K))$, not generated by commutators and we now explain how to count these.
 
\begin{proposition}\label{thm:abeliancover}
Suppose $K$ is a connected $d$-periodic graph with translation group $T \cong \Z^d$ and quotient $q(K) = K/T$, with   
$q(K)$ having $\ep$ edges and $\nu$ vertices with $\ep - \nu + 1 = g > d$.  
Let $w_\sharp : H_1(q(K)) \to T$ be the weight function evaluated on the homology classes of cycles.
Then $\ker w_\sharp$ is spanned by $g-d$ cycles. 
\end{proposition}

\begin{proof}
Let $\{\ell_1, \ldots, \ell_g\}$ be a minimal set of loops generating $\pi_1(q(K))$. 
$K$ is connected, so Theorem~\ref{thm:graphH0} implies that the integer coefficient span of the weight vectors $\{ w(\ell_1), \ldots, w(\ell_g) \}$ is $T$. 
The homology group $H_1(q(K))$ is the free abelian group with the same generators as $\pi_1(q(K))$ so we have that $H_1(q(K)) \cong \Z^g$. 
Denote the map sending loops to cycles by $h : \pi_1(q(K)) \to H_1(q(K))$ with $h(\gamma) = [\gamma]$. 
Then $w_\sharp $ is defined by $w_\sharp ([ \gamma ]) =  w( \gamma) $, for some choice of $\gamma \in h^{-1}([\gamma])$.  This is well-defined because if $\gamma' \in [ \gamma ]$ then the abelianisation of the words for each representative yields the same coefficients for each generator $\ell_i$. 
That is, $w(\gamma') = w(\gamma)$ if $\gamma' \gamma^{-1} \in [ \pi_1(q(K)),\pi_1(q(K)) ] $. 
It follows that the set of weight vectors $\{w_\sharp([\ell_1]), \ldots, w_\sharp([\ell_g])\}$ spans $T$ also.
Finally, observe that $w_\sharp$ is a linear transformation from $\Z^g$ onto $\Z^d$ so the dimension of $\ker w_\sharp$ is $g-d$. 
\end{proof}

\begin{theorem}\label{thm:graphH1}
Suppose $K$ is a connected $d$-periodic graph with translation group $T \cong \Z^d$ and quotient $q(K) = K/T$.  
If $q(K)$ has $\ep$ edges and $\nu$ vertices with $\ep - \nu + 1 = g > d$, then the homology group 
$H_1(K)$ is generated by $g - d +  \binom{s}{2}$ cycles up to translation, where $d \leq s \leq g$ is the number of weight vectors from a minimal generating set of loops in $q(K)$ required to span $T$. 
\end{theorem}

\begin{proof}
Let $\{ \ell_1, \ldots, \ell_g\}$ be a minimal set of generating loops for $\pi_1(q(K))$ 
and  $q^{ab}: Q^{ab} \to q(K)$ denote the maximal abelian covering space. 

Corollary~\ref{thm:graphH1-maxab} shows that $H_1(Q^{ab})$ is generated by $\binom{g}{2}$ commutator cycles $[\ell_i, \ell_j]$ lifted to $Q^{ab}$, and all their translations. 
The fact that $[\pi_1(q(K)), \pi_1(q(K))]   < q_*(\pi_1(K))$ means that 
$H_1(K)$ also contains lifts of the $\binom{g}{2}$ commutator cycles $[\ell_i, \ell_j]$.  

Consider the mapping $q_\sharp: H_1(K) \to H_1(q(K))$ induced by the covering map $q$.  
Theorem~\ref{thm:coveringhomology} states that the image $q_\sharp(H_1(K)) = \ker w_\sharp$, which has dimension $g-d$ by Proposition~\ref{thm:abeliancover}.  
Observe that $q_\sharp$ maps every commutator cycle in $H_1(K)$  to the trivial cycle in $H_1(q(K))$, so the non-trivial elements of $\ker w_\sharp$ lift to cycles in $H_1(K)$ that are not generated by commutators. 

The final step is to establish $s$.  Let $L = \{ \ell_1, \ldots, \ell_g\}$ be a set of loops that generate $\pi_1(q(K),v)$, and let $S_L = \{w(\ell_1), \ldots, w(\ell_g)\}$ be the corresponding set of weight vectors.  These must be a spanning set for $T$, so $d \leq s \leq g$.
Note $s$ may be greater than $d$, since $\Z$ is not a vector space and therefore $S_L$ may not have a subset which is a basis of $T$.
It is nevertheless possible that a subset of $S_L$ spans $T$ so we set 
\[  s =   \min_L \min \{ |S| \,:\, S\subseteq S_L \text{ and $T = \Z$-span of $S$} \} \]
It follows that the commutator cycles in $H_1(K)$ are generated by the $\binom{s}{2}$ pairs of loops in the minimal spanning set $S$. 
\end{proof}

Next, we consider the case that $K$ is not connected but $q(K)$ \textit{is} connected, as occurs in Figure~\ref{fig:interwoven-lattices}.
The existence of multiple components of $K$ means the fundamental group represents cycles only in a single component, while the homology group encodes cycles in all components of $K$. 
Explicitly, $\gamma\in\pi_1(K,\tilde{v})$ and $\tr(\gamma)\in\pi_1(K,\tr(\tilde{v}))$ with $\tr\not\in W_{q(K)}$ can never be equivalent up to translation as loops, but after making the natural identification with cycles we nonetheless recover $[\tr(\gamma)]=\tr[\gamma]$.
Since $\tr$ generates a homeomorphism between components, we may weaken the requirements of Theorem~\ref{thm:graphH1} to allow that $K$ is disconnected but $q(K)$ is connected.

Further, if $q(K)$ is disconnected, we take the sum of contributions from each connected component and sum them together.
If $q(K)$ has $N$ connected components, $Q_i$ with $g_i = \ep_i \nu_i +1$, this means $H_1(K)$ is generated by $ \sum_{i=1}^N \left(g_i - d_i + \binom{s_i}{2}\right)$ cycles up to translation, where $T_i \leq T$ is a sublattice of dimension $d_i$, spanned by edge-weights in $Q_i$,   $d_i \leq s_i \leq g_i$ is the number of weight vectors from a minimal generating set of loops in the $i^\mathrm{th}$ component of $q(K)$ required to span $T_i$.

Finally, we remark that \textit{counting} cycles or finding a basis of $H_1(K)$ (even up to translation) is complicated by various linear dependencies.
For example, in the 1-skeleton of the $3$-dimensional cube $C_3$, it is well known that five square faces form a basis for $H_1(C_3)$ and the sixth square face can be written as a sum of the remaining five.
More generally, similar dependencies will exist for most periodic graphs (e.g., anything not \emph{embedded} in $\R^2$). 
For this reason the generating set for $H_1(K)$ in Theorem~\ref{thm:graphH1} is not minimal, and $g-d + \binom{s}{2}$ is  an upper bound on the Betti number ``per unit cell'' for $K$. 
Indeed, we have the following estimate for $\beta_1$ of particular subsets of $K$. 

\begin{proposition}\label{thm:graphH1betti}
With the same notation as in Theorem~\ref{thm:graphH1}, suppose $K$ is a connected $d$-periodic graph in $\R^l$, and let $Y_n \subset K$ be constructed from $n^d$ copies of a fundamental unit cell for $K$.  Then the first Betti number for $\overline{Y}_n$ satisfies 
\[   \lim_{n \to \infty} \frac{ \beta_1( \overline{Y}_n) }{ n^d }  =   \ep - \nu.   \] 
\end{proposition}
\begin{proof}
Recall that a fundamental unit cell $U$ has $\nu$ vertices chosen in a fundamental domain $D \subset \R^l$ and $\ep$ edges, with each edge having at least one vertex in $D$. 
The subset $Y_n$ contains $n^d$ copies of $U$, arranged in a regular $n \times \ldots n $ block, so $Y_n$ has $\nu n^d$ vertices and $\ep n^d$ edges. The closure  $\overline{Y}_n$  differs from $Y_n$ by adding extra vertices to edges that have only one end point in $Y_n$.  There are at most $M d \ep n^{d-1}$ such edges, where $M$ is the magnitude of the largest coefficient in any of the weight vectors of $q(K)$. 
Now we see that the Euler characteristic,  \[ \nu n^d - \ep n^d \leq \chi ( \overline{Y}_n) \leq \nu n^d  + M d \ep n^{d-1} - \ep n^d . \]

Our next task is to bound $\beta_0(\overline{Y}_n)$. $K$ is connected, but $\beta_0(\overline{Y}_n)$ may have multiple components, even for arbitrarily large $n$. 
So consider the set of vertex representatives 
$\{\tilde{v}_1,\ldots,\tilde{v}_\nu \} \subset U$ and the 1-chains $p_{ij}$ that make $\tilde{v}_i$ homologous to $\tilde{v}_j$. For any 1-chain $p_{ij}$, there will be a vertex ``furthest from $U$'' in the sense that this vertex $a = \mathbf{t}_a(\tilde{v}_k)$ for some $\tilde{v}_k \in U$, and $\mathbf{t}_a$ has the largest coefficient $c_l$ with respect to the basis chosen for $T$.  For each pair $\tilde{v}_i, \tilde{v}_j$ define
\[ M_{ij} = \min_{p_{ij}} \max_{e \in p_{ij}} 
\max_{l} \left\{ | c_l | \; : \; e = (a,b), \; a = \mathbf{t}_a(\tilde{v}_k) \text{ and } \mathbf{t}_a = \sum c_l \mathbf{t_l}  \right\},  \]
and let $M_0 = \max \{ M_{ij} \text{ for } i,j \in \{1, \ldots, \nu\} \}$.
Then for large $n$, consider $Y_{n-2M_0}$ as a centered subset of $Y_n$; the vertices of $ Y_{n-2M_0}$ must be connected by 1-chains contained in $Y_n$. The number of vertices in $Y_n \setminus Y_{n-2M_0}$ is then an upper bound on the number of components of $\overline{Y}_n$.  It follows that $1 \leq \beta_0 (\overline{Y}_n) \leq  2 d \nu M_0 n^{d-1}$. 
Finally, we have $\beta_1 =  \beta_0 - \chi $ so 
\[ \beta_0(\overline{Y}_n) - (\nu - \ep) n^d \geq \beta_1(\overline{Y}_n) \geq  \beta_0(\overline{Y}_n) - M d \ep n^{d-1} - (\nu - \ep) n^d \] 
and the result follows.  
\end{proof}

\subsection{Examples}

We now illustrate the above results with some worked examples. 

\subsubsection{Simple cubic} 
We start with the simple cubic graph $K_S \subset \R^3$ which has vertices at points with integer coordinates, and an edge joining $v_1, v_2$ if and only if $|| v_2 - v_1 ||_1 = 1$. The maximal translational symmetry group $T \cong \Z^3$. 
The weighted quotient graph $q(K_S)$ has a single vertex and three loop edges, $\ell_1, \ell_2, \ell_3$, with weights $w(\ell_1) = (1,0,0)$, $w(\ell_2) = (0,1,0)$ and $w(\ell_3) = (0,0,1)$. These clearly span $T$, and since $\ep - \nu + 1 = 3 - 1 + 1 = 3 = d$, we see that $K_S$ is the maximal abelian cover for $q(K_S)$ with the homology group $H_1(K_S)$ generated by the three commutator cycles and all their translations. 
\[  \gamma_{12} = \tilde{\ell_1} + t_x(\tilde{\ell_2}) - t_y(\tilde{\ell_1}) - \tilde{\ell_2}, \quad
\gamma_{23} = \tilde{\ell_2} + t_y(\tilde{\ell_3}) - t_z(\tilde{\ell_2}) - \tilde{\ell_3}, \quad
\gamma_{31} = \tilde{\ell_3} + t_z(\tilde{\ell_1}) - t_x(\tilde{\ell_3}) - \tilde{\ell_1}. 
\]
On the other hand, using the Euler characteristic calculation as in the proof of Proposition~\ref{thm:graphH1betti} the number of independent cycles in an $n^3$ subset of $K_S$ is $\beta_1(n) = 2 n^3 - 3 n^2 + 1$. 

\subsubsection{Interwoven cubic} 
Now consider the example of Figure~\ref{fig:interwoven-lattices}, $K_I$, consisting of two copies of the simple cubic graph, one copy with vertices at integer coordinates and the other with vertices at half-integer coordinates. The maximal translation symmetry for this structure is the group $T_I$ generated by $t_1 = (-\frac12,\frac12,\frac12)$, $t_2 = (\frac12,-\frac12,\frac12)$ and $t_3 = (\frac12,\frac12,-\frac12)$.
The quotient graph is again the $3$-bouquet graph, with weights defined by coordinates with respect to $\{t_1, t_2, t_3\}$ so that $w(\ell_1) = (0,1,1)$, $w(\ell_2) = (1,0,1)$ and $w(\ell_3) = (1,1,0)$.   The subgroup $W_Q$ has index-2 in $T_I$, reflecting the existence of the two cosets in $H_0(K_I)$, namely $[\tilde{v}_1 ] = [(0,0,0)]$ and $[t_1(\tilde{v}_1)] = [(-1/2, 1/2,1/2)]$. 

We count the cycles in $K_I$ up to translation by noting that $\ker w_\sharp$ is trivial and so $H_1(K_I)$ is generated by the three commutator cycles, $\gamma_{12}, \gamma_{23}, \gamma_{31}$ with their translations. Note that these commutators have the same expression in $\pi_1(q(K_I),\tilde{v}_1)$ as they did in $\pi_1(q(K_S))$, but in their lifts to $H_1(K_I)$, we need to replace $t_x$ by $w(\ell_1) = t_2 + t_3$, and so on.

\subsubsection{Triangular grid}
A planar triangular grid, $K_P$, and associated weighted quotient graph are illustrated in Figure~\ref{fig:four-cases}. 
In this example we have $T \cong \Z^2$, and weights for the three loops of $q(K_P)$ are  $w(\ell_1) = (1,0)$, $w(\ell_2) = (0,1)$ and $w(\ell_3) = (1,1)$, with respect to the standard basis. Now we have $g = 3-1+1 = 2 > d$ and must study $\ker w_\sharp$ to find loops that are not generated by commutators. 
Clearly the cycle $\gamma = [\ell_1 + \ell_2 - \ell_3] \in H_1(q(K_P))$ which has $w_\sharp (\gamma) = 0$ is an appropriate choice. 
Finally, we study the set $S = \{ (1,0), (0,1), (1,1)\}$ and see that the first two weight-vectors span $T$ so that $s = 2$ and only one of the three commutator cycles is required as a generator in $H_1(K_P)$. In summary, we can choose the following two cycles as generators up to translation for $H_1(K_P)$: 
\[ \gamma = \tilde{\ell_1} + t_x(\tilde{\ell_2}) - \tilde{\ell_3}  , \quad 
\gamma_{12} = \tilde{\ell_1} + t_x(\tilde{\ell_2}) - t_y(\tilde{\ell_1}) - \tilde{\ell_2} .
\]
The Euler characteristic calculation yields 
$\chi(n) = -2n^2 + 2n + 1$ and $\beta_1(n) = 2n^2 - 2n$.

\subsubsection{Another abelian cover of the 3-bouquet graph}

\begin{figure}[ht]
\centering
\begin{tikzpicture}
    \clip (-0.5,-0.5) rectangle (11,4.5);
    \foreach \i in {0,...,4}
        {
        \foreach \j in {0,...,4}
            {
            \node[shape=circle,fill=black,inner sep=0pt,minimum size=0.15cm] at (\i,\j) { };
            \draw[line width=0.2mm] (\i-0.5,\j-0.5) to (\i+0.5,\j+0.5);
            }
        \foreach \j in {-3,...,4}
            {
            \draw[line width=0.2mm] (\i,\j) to [bend left = 25] (\i,\j+3);
            }
        }

    \foreach \i in {-2,...,4}
        {
        \foreach \j in {0,...,4}
            {
            \draw[line width=0.2mm] (\i,\j) to [bend right = 25] (\i+2,\j);
            }
        }

    \fill[white] (4.5,-0.5) rectangle (7,4.5);
    
    \node[shape=circle,fill=black,inner sep=0pt,minimum size=0.15cm] (quotient) at (8.5,2) { };
    
    \node[shape=circle,fill=white,scale=0.9] () at (9.8,0.7) {$(1,1)$};
    \node[shape=circle,fill=white,scale=0.9] () at (8.5,4) {$(2,0)$};
    \node[shape=circle,fill=white,scale=0.9] () at (7.2,0.7) {$(0,3)$};
    \draw[>=latex,->,line width=0.3mm] (quotient) to [out=360,in=300,looseness=80] (quotient);
	\draw[>=latex,->,line width=0.3mm] (quotient) to [out=120,in=60,looseness=80] (quotient);
	\draw[>=latex,->,line width=0.3mm] (quotient) to [out=240,in=180,looseness=80] (quotient);

\end{tikzpicture}
\caption{A section of the 2-periodic graph $K_A$ (left) and its WQG with edge weights $(1,1)$, $(2,0)$ and $(0,3)$ (right). The vertices of $K_A$ are at all points with integer coordinates and the translation group $T \cong \Z^2$ has the standard basis.}
\label{fig:new-example}
\end{figure}

The periodic graph $K_A$ in Figure~\ref{fig:new-example} demonstrates the case when the set $S$ of weight vectors constructed in the proof of Theorem~\ref{thm:graphH1}, requires $s > d$ elements to span $T$.  
For ease of notation, let $\ell_1$ be the edge of $q(K_A)$ of weight $(1,1)$, let $\ell_2$ be the edge of weight $(2,0)$ and $\ell_3$ the edge of weight $(0,3)$.
First we look for a cycle that generates $\ker w_\sharp$, and see by inspection that $\gamma = [ 6 \ell_1 - 3\ell_2 - 2\ell_3] \in H_1(q(K_A))$ is suitable. 
Studying $S = \{(1,1), (2,0), (0,3)\}$, we quickly see that the integer span of each pair of weight vectors generates a strict subgroup of $T$. This is why $s = 3 > 2$. It follows that appropriate lifts of all three commutator loops $\gamma_{ij}$  are required to generate $H_1(K_A)$ up to translation. 

The Euler characteristic computation is a little more involved than the previous examples but we find 
$\chi(n) = -2 n^2 + 3n + 2n + 1$ and 
$\beta_1(n) = 2n^2 - 5n$ for $n \geq 3$.

\subsubsection{Kagome revisited}

We saw at the end of Section~\ref{ssec:WQG}, that the Kagome pattern has a WQG with three vertices and six edges, Figure~\ref{fig:kagomeWQG}.  
The four generating cycles we chose for $\pi_1(q(K_G))$ have weight vectors $ S = \{ (0,0), (0,0), (1,0), (0,1) \}$. 
It follows immediately that the cycles $[\ell_1]$ and $[\ell_2]$ are in $\ker w_\sharp$, and that the weight-vectors for $\ell_3$ and $\ell_4$ generate $T \cong \Z^2$. 
We therefore have the following choice of generating cycles for $H_1(K_G)$: 
\begin{align*}
\gamma_1 & = [\tilde{\ell}_1] = [\tilde{e}_{12} + \tilde{e}_{23} - \tilde{e}_{13}], \\
\gamma_2 & = [\tilde{\ell}_2] = [\tilde{f}_{12} + t_{(0,-1)}(\tilde{f}_{23}) - \tilde{f}_{13}], \\
\gamma_{34} & = [\tilde{\ell_3} + t_{(0,1)}(\tilde{\ell_4}) - t_{(1,0)}(\tilde{\ell_3}) - \tilde{\ell_4}] \\
 & = [ \tilde{e}_{12} - t_{(1,0)}(\tilde{f}_{12}) + t_{(1,0)}(\tilde{e}_{13}) - t_{(1,1)}(\tilde{f}_{13}) +t_{(1,1)}(\tilde{f}_{12}) - t_{(1,0)}(\tilde{e}_{12}) +t_{(1,0)}(\tilde{f}_{13}) - \tilde{e}_{13} ] .
\end{align*}
Finally, the Euler characteristic for $n\times n$ unit cells of the Kagome pattern is $\chi(n) = (3-6)n^2 + 2n + n$, and so $\beta_1(n) = 3n^2 - 3n + 1$.

\begin{figure}[ht]
\centering
\scalebox{0.85}{
\begin{tikzpicture}
    \node[shape=circle] (v1) at (6,-1.3) {$v_1$};
    \node[shape=circle] (v2) at (7.5,1.3) {$v_2$};
    \node[shape=circle] (v3) at (9,-1.3) {$v_3$};

    \node[shape=circle,scale=0.8] (00v1) at (0,0) {$\tilde{v}_1$};
    \node[shape=circle,fill,scale=0.3] (0p1v1) at (1.2,2.2) {};
    \node[shape=circle,fill,scale=0.3] (p10v1) at (2.4,0) {};
    \node[shape=circle,fill,scale=0.3] (m1p1v1) at (-1.2,2.2) {};
    \node[shape=circle,fill,scale=0.3] (0m1v1) at (-1.2,-2.2) {};
    \node[shape=circle,fill,scale=0.3] (p1m1v1) at (1.2,-2.2) {};
    \node[shape=circle,fill=white] (p1p1v1) at (3.6,2.2) {};
    \node[shape=circle,fill=white] (p2m1v1) at (3.6,-2.2) {};
    \node[shape=circle,fill=white] (m10v1) at (-2.4,0) {};
    
    \node[shape=circle,scale=0.8] (00v2) at (0.6,1.1) {$\tilde{v}_2$};
    \node[shape=circle,fill,scale=0.3] (p10v2) at (3,1.1) {};
    \node[shape=circle,fill,scale=0.3] (m10v2) at (-1.8,1.1) {};
    \node[shape=circle,fill,scale=0.3] (0m1v2) at (-0.6,-1.1) {};
    \node[shape=circle,fill,scale=0.3] (p1m1v2) at (1.8,-1.1) {};
    \node[shape=circle,fill=white] (0m2v2) at (-1.8,-3.3) {};
    \node[shape=circle,fill=white] (p1m2v2) at (0.6,-3.3) {};
    \node[shape=circle,fill=white] (p2m2v2) at (3,-3.3) {};
    \node[shape=circle,fill=white] (m1p1v2) at (-0.6,3.3) {};
    \node[shape=circle,fill=white] (0p1v2) at (1.8,3.3) {};
    
    \node[shape=circle,scale=0.8] (00v3) at (1.2,0) {$\tilde{v}_3$};
    \node[shape=circle,fill,scale=0.3] (m10v3) at (-1.2,0) {};
    \node[shape=circle,fill,scale=0.3] (0p1v3) at (2.4,2.2) {};
    \node[shape=circle,fill,scale=0.3] (m1p1v3) at (0,2.2) {};
    \node[shape=circle,fill,scale=0.3] (0m1v3) at (0,-2.2) {};
    \node[shape=circle,fill,scale=0.3] (p1m1v3) at (2.4,-2.2) {};
    \node[shape=circle,fill=white] (m2p1v3) at (-2.4,2.2) {};
    \node[shape=circle,fill=white] (m1m1v3) at (-2.4,-2.2) {};
    \node[shape=circle,fill=white] (p10v3) at (3.6,0) {};
    
    \node[shape=circle,fill=white] (arrowup) at (0.8,2.0) {};
    \node[shape=circle,fill=white] (arrowright) at (2.0,-0.2) {};

    \path[every node/.style={font=\sffamily\small}]
    	(v1) edge[>=latex,->] node[left] {\small $e_{12}$} (v2)
    	(v1) edge[>=latex,bend left = 45,->] node[left] {\small $f_{12}$} (v2)
    	(v1) edge[>=latex,->] node[above] {\small $e_{13}$} (v3)
    	(v1) edge[>=latex,bend left = -45,->] node[above] {\small $f_{13}$} (v3)
    	(v2) edge[>=latex,->] node[right] {\small $e_{23}$} (v3)
    	(v2) edge[>=latex,bend left = 45,->] node[right] {\small $f_{23}$} (v3)
    	
    	
    	(00v1) edge[blue, thick] node[left] {\small $\tilde{e}_{12}$} (00v2)
    		edge[blue, thick] node[below] {\small $\tilde{e}_{13}$} (00v3)
    		edge[blue, thick] node[above] {\small $\tilde{f}_{13}$} (m10v3)
    		edge[blue, thick] node[right] {\small $\tilde{f}_{12}$} (0m1v2)
    	(00v2) edge[blue, thick] node[right] {\small $\tilde{e}_{23}$} (00v3)
    		edge[blue, thick] node[left] {\small $\tilde{f}_{23}$} (m1p1v3)
    		edge node {} (0p1v1)
    	(00v3) edge node {} (p10v1)
    	    edge node {} (p1m1v2)
    	(0p1v1) edge node {} (0p1v2)
    		edge node {} (0p1v3)
    		edge node {} (m1p1v3)
    	(p10v2) edge node {} (p10v1)
    		edge node {} (p10v3)
    		edge node {} (0p1v3)
    		edge node {} (p1p1v1)
    	(0p1v3) edge node {} (0p1v2)
    	    edge node {} (p1p1v1)
    	(p10v1) edge node {} (p10v3)
    	    edge node {} (p1m1v2)
    	(m1p1v3) edge node {} (m1p1v2)
    	    edge node {} (m1p1v1)
    	(p1m1v3) edge node {} (p2m1v1)
    		edge node {} (p2m2v2)
    		edge node {} (p1m1v2)
    		edge node {} (p1m1v1)
    	(p1m1v1) edge node {} (p1m1v2)
    		edge node {} (p1m2v2)
    		edge node {} (0m1v3)
    	(m1p1v1) edge node {} (m1p1v2)
    		edge node {} (m2p1v3)
    		edge node {} (m10v2)
    	(m10v2) edge node {} (m2p1v3)
    		edge node {} (m10v3)
    		edge node {} (m10v1)
    	(0m1v1) edge node {} (0m1v3)
    		edge node {} (m1m1v3)
    		edge node {} (0m1v2)
    		edge node {} (0m2v2)
    	(0m1v3) edge node {} (p1m2v2)
    	    edge node {} (0m1v2)
    	(m10v3) edge node {} (0m1v2)
    	    edge node {} (m10v1);
\end{tikzpicture}
}
\caption{ A section of the Kagome pattern $K_G$ and its WQG $q(K_G)$ with edge labels for a choice  representative in the cover. Refer to \protect{Figure~\ref{fig:kagomeWQG}} for the edge weights.  }
\label{fig:kagome-cycles}
\end{figure}


\section{Periodic Cellular Complexes}
\label{sec:Complexes}

In this section, $K$ denotes a $d$-periodic cellular complex equipped with a group of translations $T\cong \Z^d$ acting on $K$, and $q:K\to q(K):=K/T$ denotes the canonical quotient map.
We fix an orientation for each cell $q(K)$, which induces an orientation of all cells in $K$.
Further, chain complexes and homology groups are now understood to have coefficients in some field $\F$ so that homology groups are vector spaces.

\subsection{Quotient Spaces}

For cell complexes with dimension $k\geq 2$ it is difficult to generalise the notion of a weighted quotient graph to a ``weighted quotient space''.
Weights on edges encode the relative offset of boundary vertices, allowing us to distinguish between $1$-cycles of $q(K)$ that lift to true cycles in $K$ and those which are essentially a path through the periodic structure.
For higher dimensions there is no such canonical pairing of $\partial \sigma$ with a weight on $\sigma$.
Without this information there are several cases we cannot in general uncouple when looking at the quotient space.

\begin{lemma} \label{lem:four-cases}
Suppose $\gamma\in C_\bullet(K)$ is such that $q(\gamma)\in Z_\bullet(q(K))$. Then exactly one of the following holds
\begin{enumerate}
\item $\gamma\in Z_\bullet(K)$ and $q(\gamma)=0$
\item $\gamma\in Z_\bullet(K)$ and $q(\gamma)\neq 0$
\item $\gamma\not\in Z_\bullet(K)$ and $q(\gamma)=0$
\item $\gamma\not\in Z_\bullet(K)$ and $q(\gamma)\neq 0$
\end{enumerate}
\end{lemma}
In Case~3 and Case~4 this implies that $\partial(\gamma)$ is a non-zero element of $\ker(q)$, which in Case~4 is how $q(\gamma)$ passes to a non-trivial cycle of $q(K)$.
See Figure~\ref{fig:four-cases} for an illustration of each case for a periodic graph.
\begin{proof}
The quotient map $q$ induces a surjective homomorphism  $q_\bullet:C_\bullet(K)\to C_\bullet(q(K))$ which we also denote by $q$.
The four cases above are mutually exclusive, so the result follows by the assumption that $0 = \partial q(\gamma)=q\left(\partial\gamma\right)$.   Case~3 (resp.~4) occurs when $q(\gamma)=0$ ($q(\gamma)\neq 0$) and Case~1 (Case~2) fails.
\end{proof}

Note that in Case~1, $\gamma$ is a cycle in $C_\bullet(K)$ which disappears in $C_\bullet(q(K))$ while in Case~4 we have gained a cycle in $C_\bullet(q(K))$ from a non-cycle in $C_\bullet(K)$.  
This captures the basic obstruction to determining the homology of $K$ from the homology of $q(K)$, without the additional information provided by the weights.

\begin{figure}
\centering
\begin{tikzpicture}
    \foreach \i in {-1,...,4}
    {
    \foreach \j in {-1,...,4}
    {
    \ifthenelse{\i=-1 \OR \i=4 \OR \j=-1 \OR \j=4}{\node[shape=circle,fill=white] (\i,\j) at (\i,\j) { };}{\node[shape=circle,fill=black,inner sep=0pt,minimum size=0.15cm] (\i,\j) at (\i,\j) { };}
    }
    }

    \node[shape=circle,fill=black,inner sep=0pt,minimum size=0.15cm] (quotient) at (7.5,1.5) { };
    
    \node[shape=circle,fill=white,scale=0.9] () at (8.8,0.2) {$(1,0)$};
    \node[shape=circle,fill=white,scale=0.9] () at (7.5,3.5) {$(0,1)$};
    \node[shape=circle,fill=white,scale=0.9] () at (6.2,0.2) {$(1,1)$};
    \draw[>=latex,->,line width=0.3mm] (quotient) to [out=360,in=300,looseness=80] (quotient);
	\draw[>=latex,->,line width=0.3mm] (quotient) to [out=120,in=60,looseness=80] (quotient);
	\draw[>=latex,->,line width=0.3mm] (quotient) to [out=240,in=180,looseness=80] (quotient);	
    
    \path[every node/.style={font=\sffamily\small}]
    	
    	(0,0) edge[>=latex,red,line width=0.4mm,->] node {} (1,0)
    		edge[>=latex,red,line width=0.4mm,<-] node {} (0,1)
    		edge node {} (1,1)
    		edge node {} (-1,0)
    		edge node {} (-1,-1)
    		edge node {} (0,-1)
    	(0,1) edge[>=latex,red,line width=0.4mm,<-] node {} (1,1)
    		edge node {} (-1,1)
    		edge node {} (0,2)
    		edge node {} (1,2)
    		edge node {} (-1,0)
    	(1,0) edge[>=latex,red,line width=0.4mm,->] node {} (1,1)
    		edge node {} (1,-1)
    		edge node {} (2,1)
    		edge node {} (2,0)
    		edge node {} (0,-1)
    	(2,0) edge node {} (2,1)
    		edge node {} (2,-1)
    		edge node {} (3,1)
    		edge node {} (1,-1)
    		edge[>=latex,green!50!gray,line width=0.4mm,->] node {} (3,0)
    	(3,0) edge node {} (3,1)
    		edge node {} (3,-1)
    		edge node {} (4,1)
    		edge node {} (4,0)
    		edge node {} (2,-1)
    	(1,1) edge node {} (1,2)
    		edge node {} (2,2)
    		edge node {} (2,1)
    	(2,1) edge node {} (2,2)
    		edge node {} (3,2)
    		edge[>=latex,green!50!gray,line width=0.4mm,<-] node {} (3,1)
    	(3,1) edge node {} (3,2)
    		edge node {} (4,2)
    		edge node {} (4,1)
    	(0,2) edge node {} (-1,2)
    		edge node {} (-1,1)
    		edge[>=latex,orange,line width=0.4mm,<-] node {} (0,3)
    		edge[>=latex,orange,line width=0.4mm,->] node {} (1,3)
    		edge node {} (1,2)
    	(0,3) edge node {} (-1,2)
    		edge node {} (-1,3)
    		edge node {} (0,4)
    		edge node {} (1,4)
    		edge[>=latex,orange,line width=0.4mm,<-] node {} (1,3)
    	(1,2) edge node {} (1,3)
    		edge node {} (2,2)
    		edge node {} (2,3)
    	(1,3) edge node {} (1,4)
    		edge node {} (2,4)
    		edge node {} (2,3)
    	(2,2) edge node {} (3,2)
    		edge[>=latex,blue,line width=0.4mm,->] node {} (3,3)
    		edge node {} (2,3)
    	(2,3) edge node {} (2,4)
    		edge node {} (3,3)
    		edge node {} (3,4)
    	(3,2) edge node {} (4,2)
    		edge node {} (4,3)
    		edge node {} (3,3)
    	(3,3) edge node {} (4,3)
    		edge node {} (4,4)
    		edge node {} (3,4);
\end{tikzpicture}
\caption{A section of a $2$-periodic graph and its weighted quotient graph with respect to translations by $\Z^2$.
In red is a cycle satisfying Case~1 of Lemma~\ref{lem:four-cases}, in orange a cycle satisfying Case~2, in green a chain satisfying Case~3, and in blue a chain satisfying Case~4.}
\label{fig:four-cases}
\end{figure}

Since the boundary map is trivial in degree-$0$, $\ker(q_0)\leq Z_0(K)=C_0(K)$, the $0$-cycles of periodic cell-complexes 
all satisfy Cases~1 and~2 of Lemma~\ref{lem:four-cases}.
In the case of a periodic graph, Theorem~\ref{thm:graphH0} essentially determines when a $0$-cycle satisfying Case~1 is also a boundary.
Theorem~\ref{thm:graphH1} also helps us to distinguish between the four cases in degree-$1$ homology.
A $1$-cycle in $Z_1(q(K))$ of zero weight exactly corresponds to Case~2, those of non-zero weight exactly correspond to Case~4.
There are no $2$-simplices in a periodic graph, so this entirely defines the degree-$1$ homology.
For higher dimension, without a well-defined ``weighted quotient space'', we have no analogous tool to recover the homology of $K$ from $q(K)$ and distinguish between cycles in $q(K)$ satisfying Case~2 and Case~4, nor to recover cycles in $K$ satisfying Case~1.

\subsection{Finite Approximations}

An alternative way to calculate the homology of $K$ is to approximate it from finite subcomplexes of increasing size. 
Recall the definitions of the complexes $X$ and $Y$ in Section~\ref{sec:Definitions}, built using a sub-lattice of $T$.
Consider the simplified case that $\tilde{T}_n = \langle n\mathbf{t}_1, \ldots, n\mathbf{t}_d \rangle$ so that $\tilde{T}_n$ is an index-$n^d$ sub-lattice of $T$ and write 
$X_n = K/\tilde{T}_n$ and $Y_n$ for the closure of a fundamental unit cell for $X_n$.
It is possible to choose the $Y_n$ so that for each $m\leq n$ we have a natural embedding $i: Y_m\inc Y_n$.
This creates a directed system $(Y_n,i)$ over $\N$ whose direct limit is $K$.
In the case that $m$ divides $n$, we have that $\tilde{T}_n < \tilde{T}_m$, so that $X_n$ covers $X_m$ and there is a covering map $p: X_n \surj X_m$. By Theorem~\ref{thm:coveringspaces} we also have that $K$ covers $X_n$ for all $n$.
Applying the homology functor to both systems we have
\begin{align*}
H_\bullet(Y_1)\xrightarrow{i}H_\bullet(Y_2)\xrightarrow{i}\cdots\xrightarrow{i}H_\bullet(Y_n)\xrightarrow{i}\cdots \phantom{\cdots \xrightarrow{i}\cdots\xrightarrow{p}}&\phantom{\cdots} \\ 
\cdots \xrightarrow{i}H_\bullet(K)\xrightarrow{p}&\cdots \\
& \cdots\xrightarrow{p}H_\bullet(X_n)\xrightarrow{p}\cdots \xrightarrow{p}H_\bullet(X_2)\xrightarrow{p}H_\bullet(X_1).
\end{align*}
In the right side of the diagram, we have slightly abused notation in writing all homologies in a single line, as $H_\bullet(X_n)\xrightarrow{p}H_\bullet(X_m)$ exists if and only if $m|n$.
The diagram shows it is reasonable to approximate the homology of $K$ with $X_n$ or $Y_n$ for any $n\in\N$.
However, as remarked at the end of  Section~\ref{ssec:cycles} there are some subtleties we must be careful of with this approach.

For $\overline{U}$ the closure of a fundamental unit cell of $K$, we can write every $k$-cell of $X_n$ (not necessarily uniquely) as $\mathbf{t}(\sigma)$ where $\sigma\in \overline{U}$ and $\mathbf{t}\in (\Z/n\Z)^d$.
Thus $\dim C_k(X_n)\leq n^d\cdot\dim C_k(\overline{U})$ and for large $n$ we can thus bound
\[
\beta_k(X_n) \leq \dim C_k(X_n) \leq n^d\cdot\dim C_k(\overline{U}) = O(n^d)
\]
That is, the homology of $X_n$ is bounded by polynomial growth.
We could also obtain a similar result for $Y_n$.
Not all behaviour will be regular with respect to $n$. 
For example, when $K$ is the interwoven cubical lattices in Figure~\ref{fig:interwoven-lattices} then for $n$ odd $\beta_0(X_n)=1$ but for $n$ even $\beta_0(X_n)=2$.
The difficulty in using finite approximations for $K$ thereby lies with balancing the following. 
\begin{itemize}
    \item When is $n$ sufficiently large that $X_n$ or $Y_n$ contain a set of cycles and boundaries which generate the homology of $K$ up to translation?
    \item When is $H_\bullet(X_n)$ or $H_\bullet(Y_n)$ too large to compute in a practical timeframe?
\end{itemize}
\begin{remark}
If $K=\{K_t\}$ is a filtered complex such that each $K_t$ is also $d$-periodic with respect to the same translational group $T$ then this induces a filtration on each $X_n$ (or $Y_n$).
For any point $(a,b)$ of the persistence diagrams occurring with multiplicity $M(n)$ for $X_n$ (and analogously for $Y_n$) we may bound
\[
M(n)\leq \beta_\bullet^{a,b}(X_{n,t})\leq\beta_\bullet(X_{n,\frac{a+b}{2}})\leq O(n^d)
\]
where $\beta_\bullet^{a,b}(X_{n,t})$ denotes the persistence Betti number between $X_{n,a}$ and $X_{n,b}$.
\end{remark}

If we approximate $K$ with either $X_n$ or $Y_n$, the inaccuracy of the topology occurs due to boundary conditions.
When truncating $K$, irregular cycles appear in $Y_n$ in the outer layers of unit cells. 
When imposing periodic boundary conditions, we instead introduce \textit{toroidal} or \textit{open} cycles which are present in $X_n$ but not present in the lift to $K$.
Toroidal cycles are related to Case~4 of Lemma~\ref{lem:four-cases}, 
and are due to the toroidal structure of $\R^l/ \Z^d$. 
After lifting to $K$, we think of such cycles as representing part of an unbounded $k$-dimensional network through $K$ (i.e.,~infinite paths for $k=1$, infinite sheets for $k=2$ and so on).
For $n$ sufficiently large, toroidal cycles will themselves have translational symmetry within $X_n$, 
so that a cycle may be fixed by some translations $t \in T$ and mapped to distinct copies by other translations. 
This allows us to make a strong statement about the basis of toroidal cycles in $X_n$.

\begin{theorem}{(Toroidal cycle growth)} \label{thm:order}
Let $I^n_\bullet$ be the subgroup of $H_\bullet(X_n)$ induced by the image of the covering map $p_n:K\to X_n$.
Then $\dim(H_\bullet(X_n)/I^n_\bullet)=O(n^{d-1})$ for $n$ sufficiently large.
\end{theorem}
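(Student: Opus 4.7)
The plan is an excision argument comparing $X_n$ with a finite subcomplex of $K$. Throughout, let $Y_n\subset K$ denote the smallest subcomplex of $K$ containing the $n^d$ translates $\{(\sum_i c_i\mathbf{t}_i)(U) \,:\, 0\leq c_i\leq n-1\}$ of a unit cell $U$, so that $Y_n$ is finite and the restriction $p_n|_{Y_n}:Y_n\to X_n$ is surjective. Because $Y_n\subset K$, the subspace $N_k:=\im\bigl((p_n|_{Y_n})_*:H_k(Y_n)\to H_k(X_n)\bigr)$ is automatically contained in $I^n_k$, and so it suffices to bound $\dim H_k(X_n)/N_k$.

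Let $A_n\subset Y_n$ be the subcomplex of cells $\sigma$ whose $\tilde{T}_n$-orbit meets $Y_n$ in more than one cell (i.e., the cells on which $p_n|_{Y_n}$ fails to be injective), and set $B_n:=p_n(A_n)\subset X_n$. One checks readily that $A_n$ is a subcomplex (since $Y_n$ is face-closed and $\tilde{T}_n$ acts freely), and that $p_n$ restricts to a cellular bijection $Y_n\setminus A_n\to X_n\setminus B_n$. The relative chain complexes $C_\bullet(Y_n,A_n)$ and $C_\bullet(X_n,B_n)$ therefore have canonically identified bases, yielding the chain-level excision $H_\bullet(X_n,B_n)\cong H_\bullet(Y_n,A_n)$. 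The cells of $A_n$ lie in the $\tilde{T}_n$-overlap of $Y_n$ with its neighbouring translates, and this overlap has bounded thickness (depending only on $\overline{U}$) along the $(d-1)$-dimensional outer faces of the $n^d$-block, so $\dim C_k(A_n),\dim C_k(B_n)=O(n^{d-1})$ for every $k$, and consequently $\dim H_\bullet(A_n),\dim H_\bullet(B_n)=O(n^{d-1})$.

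It remains to combine the long exact sequences of the pairs $(X_n,B_n)$ and $(Y_n,A_n)$. The former yields an injection
\[ H_k(X_n)\big/\im\bigl(H_k(B_n)\to H_k(X_n)\bigr) \inc H_k(X_n,B_n)\cong H_k(Y_n,A_n), \]
while the latter yields an injection
\[ H_k(Y_n,A_n)\big/\im\bigl(H_k(Y_n)\to H_k(Y_n,A_n)\bigr) \inc H_{k-1}(A_n). \]
Commutativity of the excision isomorphism with the natural inclusions identifies the image of $N_k$ in $H_k(Y_n,A_n)$ with $\im(H_k(Y_n)\to H_k(Y_n,A_n))$, so chasing the resulting two-step filtration gives
\[ \dim H_k(X_n)/N_k \leq \dim H_k(B_n)+\dim H_{k-1}(A_n) = O(n^{d-1}); \]
since $N_k\subseteq I^n_k$, this implies $\dim H_k(X_n)/I^n_k=O(n^{d-1})$, as claimed.

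The main obstacle will be the geometric bookkeeping around $A_n$: one must verify carefully that the set where $p_n|_{Y_n}$ fails injectivity is a subcomplex and that its cell count is genuinely $O(n^{d-1})$ rather than $O(n^d)$. This is where the standing assumption that $\overline{U}$ is finite becomes essential, since it controls the thickness of the overlap region between $Y_n$ and its $\tilde{T}_n$-neighbours. Once $A_n$ and $B_n$ are correctly set up, the diagram chase via chain-level excision and the two long exact sequences is routine.
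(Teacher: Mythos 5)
Your proof is correct, and it rests on the same geometric observation as the paper's --- compare $X_n$ with the honest subcomplex $Y_n\subset K$ that surjects onto it, and confine the failure of injectivity of $p_n|_{Y_n}$ to a boundary shell of $O(n^{d-1})$ cells (this is exactly where the finiteness of $\overline{U}$ and the constant $M=\max\{|c_i| : \mathbf{t}(\overline{U})\cap\overline{U}\neq\emptyset\}$ enter, and why a threshold like $n>4M$ is needed). Where you genuinely diverge is in the homological bookkeeping. The paper never introduces relative homology: it takes a chain $\gamma\in C_\bullet(Y_n)$ representing a cycle of $X_n$, observes that $\D\gamma$ lies in $\ker(p_n)$ and hence is supported outside an interior copy $Z_n\cong Y_{n-4M}$, notes that two such chains with equal boundary differ by a genuine cycle of $Y_n\subset K$, and thereby exhibits an explicit injection $H_\bullet(X_n)/I^n_\bullet\inc C_\bullet(Y_n)/C_\bullet(Z_n)$, bounding the dimension by a raw cell count $N[n^d-(n-4M)^d]$. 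You instead package the same shell as the subcomplexes $A_n$ (the non-injectivity locus) and $B_n=p_n(A_n)$, invoke the chain-level excision $H_\bullet(Y_n,A_n)\cong H_\bullet(X_n,B_n)$ (your verification that $A_n$ is a subcomplex and that the relative boundary maps match under the cell bijection is sound, since a face of $\sigma$ lying in a nontrivial $\tilde{T}_n$-translate of another face forces both into $A_n$), and chase the two long exact sequences to get the bound $\dim H_k(B_n)+\dim H_{k-1}(A_n)$. Your route is more modular and in principle sharper, since it bounds the quotient by the homology of the shell rather than by its chain groups; the paper's route is more elementary and self-contained, avoids relative homology altogether, and produces the explicit boundary-supported representatives that the subsequent remark (relating $M$ to edge weights of the quotient graph) and the discussion of toroidal cycles lean on. Both yield $O(n^{d-1})$.
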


\begin{proof}
Let $\overline{U}$ be a fundamental unit cell for $q(K)$, let $N$ denote the number of simplices in $\overline{U}$ and let 
\[
M=\max_{\mathbf{t}\in T}\max_{i=1,\dots,d}\left\{|c_i|\,:\, \mathbf{t}(\overline{U})\cap \overline{U}\neq\emptyset,\;\mathbf{t}=\sum_{i=1}^dc_i\mathbf{t}_i\right\}
\]
so that $M$ bounds the furthest distance (relative to the fundamental unit) a simplex in $\overline{U}$ may traverse.
For $n>4M$, let $Y_n$ be built from $n^d$ copies of $q(K)$ without periodic boundary conditions and let $Z_n=\bigcup_{c_1,\dots,c_d=2M}^{n-2M-1}(\sum_{i=1}^dc_i\mathbf{t}_i)(\overline{U})$ where $Z_n\cong Y_{n-4M}$ so that $Z_n \subset Y_n$.
Let $\gamma\in C_\bullet(Y_n)$ map to a cycle in $X_n$ (i.e., $p_n(\gamma)\in Z_\bullet(X_n)$).
Then $\D\gamma=0$ or $\D\gamma\in C_\bullet(Y_n)\backslash C_\bullet(Z_n)$.
In either case, if $\Tilde{\gamma}$ also maps to a cycle in $X_n$ and $\D\gamma=\D\Tilde{\gamma}$, then $\gamma$ and $\Tilde{\gamma}$ must represent the same element of $H_\bullet(X_n)/I^n_\bullet$.
This is illustrated in Figure~\ref{fig:toroidal-dimension}, where the difference of the red and blue 1-cycles must necessarily project onto $I^n_1$.

\begin{figure}[ht]
\centering
\begin{tikzpicture}
\fill [green!8] (1,1) rectangle (5,5);
\fill [green!3] (-3.7,1) rectangle (-1,5);
\fill [green!3] (7,1) rectangle (9.7,5);

\draw[step=1cm,gray!30,very thin] (-3.7,-0.7) grid (9.7,6.7);
\draw (0,-0.7) -- (0,6.7);
\draw (6,-0.7) -- (6,6.7);
\draw (-3.7,0) -- (9.7,0);
\draw (-3.7,6) -- (9.7,6);
\draw (2,2) -- (4,2) -- (4,4) -- (2,4) -- (2,2);
\draw [black!30] (-3.7,2) -- (-2,2) -- (-2,4) -- (-3.7,4);
\draw [black!30] (9.7,2) -- (8,2) -- (8,4) -- (9.7,4);

\node[shape=circle] (Yn) at (5.7,0.3) {\textbf{$Y_n$}};
\node[shape=circle] (Zn) at (3.7,2.3) {\textbf{$Z_n$}};

\draw[stealth-stealth] (0,-0.2) -- (6,-0.2);
\node[shape=circle] (n) at (3,-0.4) {$n$};
\draw[stealth-stealth] (1,6.2) -- (2,6.2);
\node[shape=circle] (M) at (1.5,6.4) {\small $M$};

    \draw [>=latex,thick,blue,->] (-0.15,3.5)--(0.5,3.8);
    \draw [>=latex,thick,blue,->] (0.5,3.8)--(0.75,3);
    \draw [>=latex,thick,blue,->] (0.75,3)--(0.3,2);
    \draw [>=latex,thick,blue,->] (0.3,2)--(-0.15,1);
    \draw [>=latex,thick,blue,->] (-0.15,1)--(0.5,0.5);
    \draw [>=latex,thick,blue,->] (0.5,0.5)--(1,1.5);
    \draw [>=latex,thick,blue,->] (1,1.5)--(1.5,2.5);
    \draw [>=latex,thick,blue,->] (1.5,2.5)--(2,1.5);
    \draw [>=latex,thick,blue,->] (2,1.5)--(2.5,2.5);
    \draw [>=latex,thick,blue,->] (2.5,2.5)--(3,3.5);
    \draw [>=latex,thick,blue,->] (3,3.5)--(3,2);
    \draw [>=latex,thick,blue,->] (3,2)--(4.5,0.5);
    \draw [>=latex,thick,blue,->] (4.5,0.5)--(4.5,1.5);
    \draw [>=latex,thick,blue,->] (4.5,1.5)--(5,1);
    \draw [>=latex,thick,blue,->] (5,1)--(6.1,2);
    \draw [>=latex,thick,blue,->] (6.1,2)--(5.7,3);
    \draw [>=latex,thick,blue,->] (5.7,3)--(5.85,3.5);
    
\node[shape=circle,blue] (gamma) at (4.5,1.65) {\small $\gamma$};

    \draw [>=latex,thick,blue!20,->] (5.85,3.5)--(6.5,3.8);
    \draw [>=latex,thick,blue!20,->] (6.5,3.8)--(6.75,3);
    \draw [>=latex,thick,blue!20,->] (6.75,3)--(6.3,2);
    \draw [>=latex,thick,blue!20,->] (6.3,2)--(5.85,1);
    \draw [>=latex,thick,blue!20,->] (5.85,1)--(6.5,0.5);
    \draw [>=latex,thick,blue!20,->] (6.5,0.5)--(7,1.5);
    \draw [>=latex,thick,blue!20,->] (7,1.5)--(7.5,2.5);
    \draw [>=latex,thick,blue!20,->] (7.5,2.5)--(8,1.5);
    \draw [>=latex,thick,blue!20,->] (8,1.5)--(8.5,2.5);
    \draw [>=latex,thick,blue!20,->] (8.5,2.5)--(9,3.5);
    \draw [>=latex,thick,blue!20,->] (9,3.5)--(9,2);
    \draw [thick,blue!20] (9,2)--(9.7,1.3);
    
    \draw [>=latex,thick,blue!20,->] (-3.7,2.1)--(-3.5,2.5);
    \draw [>=latex,thick,blue!20,->] (-3.5,2.5)--(-3,3.5);
    \draw [>=latex,thick,blue!20,->] (-3,3.5)--(-3,2);
    \draw [>=latex,thick,blue!20,->] (-3,2)--(-1.5,0.5);
    \draw [>=latex,thick,blue!20,->] (-1.5,0.5)--(-1.5,1.5);
    \draw [>=latex,thick,blue!20,->] (-1.5,1.5)--(-1,1);
    \draw [>=latex,thick,blue!20,->] (-1,1)--(0.1,2);
    \draw [>=latex,thick,blue!20,->] (0.1,2)--(-0.3,3);
    \draw [>=latex,thick,blue!20,->] (-0.3,3)--(-0.15,3.5);
    
    \draw [>=latex,thick, red!20,->] (-3.7,4)--(-3,5.5);
    \draw [>=latex,thick, red!20,->] (-3,5.5)--(-1.5,6);
    \draw [>=latex,thick, red!20,->] (-1.5,6)--(-0.5,5.5);
    \draw [>=latex,thick, red!20,->] (-0.5,5.5)--(-0.1,4.5);
    \draw [>=latex,thick, red!20,->] (-0.1,4.5)--(-0.15,3.5);
    
    \draw [>=latex,thick, red!20,->] (5.85,3.5)--(6.5,3.2);
    \draw [>=latex,thick, red!20,->] (6.5,3.2)--(6.8,4);
    \draw [>=latex,thick, red!20,->] (6.8,4)--(6.7,4.9);
    \draw [>=latex,thick, red!20,->] (6.7,4.9)--(7.8,5);
    \draw [>=latex,thick, red!20,->] (7.8,5)--(8.3,4);
    \draw [>=latex,thick, red!20,->] (8.3,4)--(9,5.5);
    \draw [thick, red!20] (9,5.5)--(9.7,5.733);

    \draw [>=latex,thick, red,->] (-0.15,3.5)--(0.5,3.2);
    \draw [>=latex,thick, red,->] (0.5,3.2)--(0.8,4);
    \draw [>=latex,thick, red,->] (0.8,4)--(0.7,4.9);
    \draw [>=latex,thick, red,->] (0.7,4.9)--(1.8,5);
    \draw [>=latex,thick, red,->] (1.8,5)--(2.3,4);
    \draw [>=latex,thick, red,->] (2.3,4)--(3,5.5);
    \draw [>=latex,thick, red,->] (3,5.5)--(4.5,6);
    \draw [>=latex,thick, red,->] (4.5,6)--(5.5,5.5);
    \draw [>=latex,thick, red,->] (5.5,5.5)--(5.9,4.5);
    \draw [>=latex,thick, red,->] (5.9,4.5)--(5.85,3.5);
    
\node[shape=circle,red] (gamma) at (1,5.2) {\small $\tilde{\gamma}$};
    
\node[shape=circle,fill=blue,scale=0.3] (d1) at (-0.15,3.5) {};
\node[shape=circle,fill=blue,scale=0.3] (d2) at (5.85,3.5) {};

\path[use as bounding box] (-3.7,-0.7) rectangle (9.7,6.7);
\end{tikzpicture}
\caption{An illustration of $Y_n$ and $Z_n$ for $d=2$ where the arrow labelled $n$ (resp. $M$) indicates a width of $n$ ($M$) fundamental domains. The chains representing $\gamma$ and $\Tilde{\gamma}$ have common boundary. They represent toroidal cycles of $H_1(X_n)$ and will represent the same class of $H_1(X_n)/I^n_1$. Moreover, $\D\gamma$ is not in $C_\bullet(Z_n)$ as $Z_n$ is strictly contained in the green region.}
\label{fig:toroidal-dimension}
\end{figure}
Now, fix a basis $\mathcal{B}$ of $H_\bullet(X_n)/I^n_\bullet$.
For every basis element $\beta_i\in\mathcal{B}$, choose a representative chain $\gamma_i\in C_\bullet(Y_n)$ and map $\beta_i\mapsto \D\gamma_i \mod C_\bullet(Z_n)$.
The argument above ensures that the $\gamma_i$ will be linearly independent, so this map extends linearly to an injective homomorphism into $C_\bullet(Y_n)/C_\bullet(Z_n)$.
The dimension of the codomain is bounded by the number of cells in $Y_n$ not contained in $Z_n$, so
\[
\dim(H_\bullet(X_n)/I^n_\bullet)\leq \dim(C_\bullet(Y_n)/C_\bullet(Z_n)) \leq N\left[n^d-(n-4M)^d\right] = O(n^{d-1}).
\]
\end{proof}


It is also possible for translational equivalence classes of non-toroidal cycles to appear with multiplicity $O(n^{d-1})$.
For example, consider a 1-periodic infinite cylinder, $K_C$, with $\beta_1(K_C)=1$.
Then for each $n$, $X_n\cong\T^2$  contains one toroidal and one non-toroidal cycle, so $\beta_1(X_n)=O(1)$ despite the existence of the non-toroidal 1-cycle.
In this case, the non-toroidal cycle (the circle of the cylinder) is, in a sense, coupled to the toroidal cycle (the torus formed by gluing opposite ends of the cylinder).
In fact, we will always be able to construct a toroidal cycle whenever we observe $O(n^{d-1})$ growth when studying families of non-toroidal cycles\footnote{Although this will not necessarily tell us about the size of the basis}.

\begin{theorem} \label{thm:order-nec}
Suppose $\gamma\in Z_k(X_n)$ is a non-toroidal $k$-cycle and $\{[\tr(\gamma)]\,:\,\tr\in T/nT\}$ is a family of $O(n^{d-m})$ non-toroidal homology classes for some positive integer $m\leq d$.
Furthermore, assume that $\tr(\gamma)\neq\gamma$ for any $\tr\in T/nT$.
Then there exist toroidal $(k+1)$-cycles $\alpha_1,\dots,\alpha_m\in Z_{k+1}(X_n)$ such that $\{[\tr(\alpha_i)]\,:\,\tr\in T/nT\}$ for $i=1,\dots,m$ are families of $O(n^{d-m})$ classes of $H_{k+1}(X_n)/I^n_k$.
\end{theorem}


\begin{proof}
There exists independent translations $\tr_1,\dots,\tr_m\in T$ such that $\tilde{\gamma}\sim\tr_i(\tilde{\gamma})$ for $i=1,\dots,m$ and $\tilde{\gamma}\in Z_k(K)$ an appropriate lift of $\gamma$.
Take $\tilde{\alpha}_i$ so that $\D(\tilde{\alpha}_i)=\tilde{\gamma}-\tr_i(\tilde{\gamma})$.
Then $\tilde{\alpha}_i\neq0$ and $\alpha_i=q_n\left(\sum_{l=0}^{n_i}\tr_i^{l}(\tilde{\alpha}_i)\right)$ is a toroidal cycle in $X_n$ where $n_i=\min(l\,:\,\tr_i^l\in nT)$.
Then $\tr_i(\alpha_i)\sim\alpha_i$ for $1\leq i\leq m$.
Otherwise, if $i\neq j$, observe that the alternating sum $\tr_j(\tilde{\alpha}_i)-\tr_i(\tilde{\alpha}_j)+\tilde{\alpha}_i-\tilde{\alpha}_j:=\lambda_{i,j}\in Z_{k+1}(K)$ is a commutator cycle in $K$.
Hence 
\[
\tr_j(\alpha_i)-\alpha_i
= \sum_{l=0}^{n_i}q_n\left(\tr_j\circ \tr_i^l(\tilde{\alpha}_i) - \tr_i^l(\tilde{\alpha}_i)\right)
= \sum_{l=0}^{n_i}q_n\left(\tr_i^l(\lambda_{i,j})\right)
\]
is non-toroidal.
Thus, for each $1\leq i\leq m$, $\tr(\alpha_i)$ will represent the same class of $H_{k+1}(X_n)/I^n$ for each $\tr\in\langle \tr_1,\dots,\tr_m\rangle$.
\end{proof}

It is important to note, as was mentioned by one of our reviewers, that the condition that $\tr(\gamma)\neq\gamma$ is necessary for this Theorem to hold
as it ensures that each $\tilde{\alpha}_i$ is non-zero so that $\alpha_i$ is toroidal.
For example, if $K \subset \R^2$ is the set of integer coordinate points $\Z^2$ and $T$ is the maximal translation group, then the non-toroidal cycle of $X_n$ represented by the sum of all (isolated) vertices is translation invariant but the construction in the proof of Theorem~\ref{thm:order-nec} fails to identify any toroidal 1-cycles.
Indeed, for this example the only 1-cycle is the zero cycle.
However, note that even for this ``counterexample'' we can write this cycle as the sum of individual vertices, whose translations are all non-homologous non-toroidal cycles.
There are exactly $n^2$ of these vertices, which is in line with the heuristic we are building for non-toroidal cycle growth with $n$.

The method of generating toroidal cycles from lower dimensional non-toroidal cycles described in Theorem~\ref{thm:order-nec} helps us determine a large collection of other cycles, both toroidal or non-toroidal.

\begin{corollary} \label{cor:order}
Under the same hypotheses as in Theorem~\ref{thm:order-nec}, for every non-empty $L\subset\{1,\dots,m\}$ with $\ell=|L|$ there exists a trichotomy
\begin{enumerate}
\item There exists a corresponding toroidal $(k+\ell)$-cycle, $\alpha_L\in Z_{k+\ell}(X_n)$, such that $\{[\tr(\alpha_L)]\,:\,\tr\in T/nT\}$ is a family of $O(n^{d-m})$ classes of $H_{k+\ell}(X_n)/I^n$.
\item There exists a corresponding non-toroidal $(k+\ell-1)$-cycle, $\alpha_L\in Z_{k+\ell-1}(X_n)$, such that $\{[\tr(\alpha_L)]\,:\,\tr\in T/nT\}$ is a family of $O(n^d)$ classes of $I^n$.
\item Some proper subset of $L$ satisfies (2).
\end{enumerate}
\end{corollary}

\begin{proof}
Apply Theorem~\ref{thm:order-nec} inductively, with base case $\ell=1$.
For $\ell>1$ and toroidal cycles associated with $\tr_{L_1},\dots,\tr_{L_{\ell-1}}$, take the alternating sum of these with $\tr_{L_\ell}$, as was done in the proof of Theorem~\ref{thm:order-nec} to construct $\lambda_{i,j}$.
This either generates another family of $O(n^{d-m})$ classes of $H_{k+\ell}(X_n)/I^n$ if the alternating sum of $(k+\ell-1)$-cycles is a boundary, or a family of $O(n^d)$ non-toroidal $(k+\ell-1)$-cycles otherwise.
If we identify non-toroidal $(k+\ell-1)$-cycles then our induction is complete, as we are unable to generate higher-dimensional cells with this method.
This shows (1) and (2), and for (3) we note that Theorem~\ref{thm:order-nec} implies the existence of subsets of $L$ satisfying (1), and we need only apply the above inductive process to completion along a given sequence to gain a subset of $L$ satisfying (2).
\end{proof}

We interpret these relationships between toroidal and non-toroidal cycles which arise from Theorem~\ref{thm:order-nec} and Corollary~\ref{cor:order} as a decomposition of a toroidal $k$-cycle into a non-toroidal $(k-m)$-cycle appearing with multiplicity $O(n^{d-m})$ and an $m$-cycle of the ``ambient torus'' created by the $d$-periodic structure.
We conjecture that this is true of all toroidal cycles.


\section{The Mayer-Vietoris Spectral Sequence}
\label{sec:mvss}

In this section, $K$ is as in Section~\ref{sec:Complexes}, and for any differential graded module $(M,d)$ such that $\im(d)< \ker(d)$, we will denote $H(M,d):=\ker(d)/\im(d)$ to be its homology or $H(M)$ when the differential $d$ is clear from the context (e.g., the boundary map).
If the reader is already familiar with the Mayer-Vietoris spectral sequence, they may wish to skip to Section~\ref{sec:mvss-2}.

\subsection{Construction} \label{sec:mvss-def}

Here we give an overview of the construction of the Mayer-Vietoris spectral sequence.
We direct the reader to \cite{brown,stafa2015mayer} for further details (or to \cite{lewis2014multicore} for an interpretation with tensors).

Let $\U=\{U_{i\in I}\}$ be a cover of $K$ by subcomplexes such that only finite intersections of sets in $\U$ may be non-empty, and let $\nerve$ denote the nerve of $\U$.
The \textit{blow-up} complex of $K$ with respect to $\U$ is the $\Z$-bigraded module $E^0=\{E^0_{p,q}\}_{p,q\in\Z}$, where 
\[
E^0_{p,q}:= \langle (J,\gamma)\,:\,\gamma\in C_q\left(\cap_{j\in J}U_j\right),\,|J|=p+1\rangle
\]
is a subspace of the tensor product $C_p(\nerve)\otimes C_q(K)$ representing $q$-chains appearing in $(p+1)$-fold intersections of $\U$
(here we use $(V,\gamma)$ to denote $V\otimes\gamma$).
$E^0$ is equipped with the maps $\D^0_{p,q}:E^0_{p,q}\to E^0_{p,q-1}$ and $\D^1_{p,q}:E^0_{p,q}\to E^0_{p-1,q}$ induced by the boundary maps on $K$ and $\nerve$ respectively.
Explicitly: $\D^0_{p,q}(x,y)=(x,\D y)$, $\D^1_{p,q}(x,y)=(-1)^q(\D x,y)$ where $\D$ is the usual boundary map/s and both maps extend linearly to $E^0_{p,q}$. 


The maps $\D^0$ and $\D^1$ satisfy  $(\D^0)^2= 0, (\D^1)^2=0, \D^0\D^1+\D^1\D^0=0$, which makes the blow-up complex $(E^0,\D^0,\D^1)$ a \textit{bi-complex} --- a two-parameter analogue of a differential graded complex (e.g., a chain complex).
Algebraically, we picture the blow-up complex with the diagram below, where every square \emph{anticommutes}.
\begin{center}
\begin{tikzpicture}
\node (00) at (0,0) {$E^0_{00}$};
\node (10) at (2,0) {$E^0_{10}$};
\node (20) at (4,0) {$E^0_{20}$};
\node (01) at (0,1.5) {$E^0_{01}$};
\node (11) at (2,1.5) {$E^0_{11}$};
\node (21) at (4,1.5) {$E^0_{21}$};
\node (02) at (0,3) {$E^0_{02}$};
\node (12) at (2,3) {$E^0_{12}$};
\node (22) at (4,3) {$E^0_{22}$};
\node (m10) at (-1,0) {$0$};
\node (m11) at (-1,1.5) {$0$};
\node (m12) at (-1,3) {$0$};
\node (30) at (6,0) {$\cdots$};
\node (31) at (6,1.5) {$\cdots$};
\node (32) at (6,3) {$\cdots$};
\node (0m1) at (0,-0.75) {$0$};
\node (1m1) at (2,-0.75) {$0$};
\node (2m1) at (4,-0.75) {$0$};
\node (03) at (0,4.5) {$\vdots$};
\node (13) at (2,4.5) {$\vdots$};
\node (23) at (4,4.5) {$\vdots$};

\draw[>=latex,->] (00) edge node[right] {} (0m1);
\draw[>=latex,->] (00) edge node[above] {} (m10);
\draw[>=latex,->] (10) edge node[right] {} (1m1);
\draw[>=latex,->] (10) edge node[above] {\small $\D^1$} (00);
\draw[>=latex,->] (20) edge node[right] {} (2m1);
\draw[>=latex,->] (20) edge node[above] {\small $\D^1$} (10);
\draw[>=latex,->] (30) edge node[above] {\small $\D^1$} (20);

\draw[>=latex,->] (01) edge node[right] {\small $\D^0$} (00);
\draw[>=latex,->] (01) edge node[above] {} (m11);
\draw[>=latex,->] (11) edge node[right] {\small $\D^0$} (10);
\draw[>=latex,->] (11) edge node[above] {\small $\D^1$} (01);
\draw[>=latex,->] (21) edge node[right] {\small $\D^0$} (20);
\draw[>=latex,->] (21) edge node[above] {\small $\D^1$} (11);
\draw[>=latex,->] (31) edge node[above] {\small $\D^1$} (21);

\draw[>=latex,->] (02) edge node[right] {\small $\D^0$} (01);
\draw[>=latex,->] (02) edge node[above] {} (m12);
\draw[>=latex,->] (12) edge node[right] {\small $\D^0$} (11);
\draw[>=latex,->] (12) edge node[above] {\small $\D^1$} (02);
\draw[>=latex,->] (22) edge node[right] {\small $\D^0$} (21);
\draw[>=latex,->] (22) edge node[above] {\small $\D^1$} (12);
\draw[>=latex,->] (32) edge node[above] {\small $\D^1$} (22);

\draw[>=latex,->] (03) edge node[right] {\small $\D^0$} (02);
\draw[>=latex,->] (13) edge node[right] {\small $\D^0$} (12);
\draw[>=latex,->] (23) edge node[right] {\small $\D^0$} (22);
\end{tikzpicture}
\end{center}

A \textit{spectral sequence} $(E^r,d^r)$ is a collection of $\Z$-bigraded modules $E^r$ and differential maps $d^r$ with the property that $H(E^r,d^r)=E^{r+1}$ (that is, the homology with respect to $d^r$ of $E^r$ determines $E^{r+1}$).
We say that $E^r$ is the $r^\mathrm{th}$-\textit{page} of the spectral sequence.
If for each $p,q\in\Z$ there exists an $r_{p,q}$ such that $E^r_{p,q}\cong E^{r_{p,q}}_{p,q}$ for $r \geq r_{p,q}$, then we define $E^\infty=\{E^{r_{p,q}}_{p,q}\}_{p,q\in\Z}$ to be the $\infty$-\textit{page} of the spectral sequence and say that $(E^r,d^r)$ converges to $E^\infty$.
In most cases, the $r_{p,q}$ achieve a maximum $R$ and we set $E^\infty:=E^R$.

Every bi-complex $(M,d^0,d^1)$ induces a spectral sequence for which $M=E^0$, $E^1=H(M,d^0)$ and $E^2=H(E^1,d^1)$.  
In the latter, we have overloaded the notation by writing $d^1$ for the map on the quotient space $E^1 = H(E^0,d^0)$ induced by the second differential map $d^1:E^0\to E^0$.
Finally, the \textit{Mayer-Vietoris spectral sequence} (MVSS) of a covering $\U$ of $K$ is the spectral sequence induced by the blow-up complex, $(E^0, \D^0, \D^1)$, whose diagonals can be identified with the homology of $K$, as we will soon see. This means 
\begin{itemize}
    \item $E^1=H(E^0,\D^0)$ is the combined (cellular) homology of the components of the cover.
    \item $E^2=H(E^1,\D^1)$ is induced by the (simplicial) homology of the nerve.
    \item For $r>1$, $\D^r=\D^1(\D^0)^{-1}\D^{(r-1)}:E^r_{p,q}\to E^r_{p-r,q+r-1}$ inductively determines the differential map on the $r^\mathrm{th}$ page whose image is independent of the choice of representative (c.f. \cite{lipsky2011parallelized}) when applying $(\D^0)^{-1}$ (see the following diagram). 
    \item $(E^r,\D^r)$ converges (with  $r_{p,q}=\max(0,p+1,q+2)$).
\end{itemize}

\begin{center}
\begin{tikzpicture}[scale = 0.95]
\node (1) at (6,-1.5) {$\cdots$};
\node (2) at (4,-1.5) {$E^0_{p,q-1}$};
\node (3) at (4,0) {$E^0_{p,q}$};
\node (4) at (2,0) {$E^0_{p-1,q}$};
\node (5) at (2,1.5) {$E^0_{p-1,q+1}$};
\node (6) at (0,1.5) {$E^0_{p-2,q+1}$};
\node (7) at (0,3) {$E^0_{p-2,q+2}$};
\node (8) at (-2,3) {$E^0_{p-3,q+2}$};
\node (9) at (-2,4.5) {$\vdots$};

\draw[>=latex,->] (1) edge node[above] {\tiny $\D^1$} (2);
\draw[>=latex,->,blue,line width=0.3mm] (3) edge node[left] {\small $\D^0$} (2);
\draw[>=latex,->,red,line width=0.3mm] (3) edge node[above] {\small $\D^1$} (4);
\draw[>=latex,->] (5) edge node[left] {\tiny $\D^0$} (4);
\draw[>=latex,->] (5) edge node[above] {\tiny $\D^1$} (6);
\draw[>=latex,->] (7) edge node[left] {\tiny $\D^0$} (6);
\draw[>=latex,->] (7) edge node[above] {\tiny $\D^1$} (8);
\draw[>=latex,->] (9) edge node[left] {\tiny $\D^0$} (8);

\draw[>=latex,->,orange,line width=0.3mm] (3) edge[out=-155,in=-70] node[below] {\small $\D^2$} (6);
\draw[>=latex,->,teal,line width=0.3mm] (3) edge[out=-140,in=-70] node[below] {\small $\D^3$} (8);
\end{tikzpicture}
\end{center}

The Mayer-Vietoris spectral sequence is so named because it generalises the standard exact sequence to covers with more than two elements. 
We can determine the homology of $K$ from the diagonals of $E^\infty$.
The simplest non-trivial scenario is when $\U=\{A,B\}$, $K = A \cup B$, and $\nerve = \{A, B, A \cap B \}$.  
The standard Mayer-Vietoris exact sequence is 
\[
\cdots \longrightarrow H_q(A\cap B) \xrightarrow{(i^*,j^*)}H_q(A)\oplus H_q(B) \xrightarrow{k^*-l^*}H_q(K)\xrightarrow{\;\delta\;} H_{q-1}(A\cap B) \longrightarrow \cdots
\]
The blow-up complex has just two columns of chain complexes $E^0_{0,\bullet}$ and $E^0_{1,\bullet}$ and the spectral sequence will converge on the second page. 
To see this, 
we note $E^1_{0,q} = H_q(A)\oplus H_q(B)$ and $E^1_{1,q} = H_q(A\cap B)$.
We may then identify the kernel and image of  $\D^1:E^1_{1,q}\to E^1_{0,q}$ as the kernel and image (resp.) of the map $H_q(A\cap B)\xrightarrow{(i^*,j^*)}H_q(A)\oplus H_q(B)$ in the exact sequence.
And from the exact sequence we determine the homology of $K$ by
\begin{multline*}
H_q(K) \cong \mathrm{coker}\left(H_q(A\cap B)\xrightarrow{(i^*,j^*)}H_q(A)\oplus H_q(B)\right) \\
    \oplus \mathrm{ker}\left(H_{q-1}(A\cap B)\xrightarrow{(i^*,j^*)}H_{q-1}(A)\oplus H_{q-1}(B)\right)
\end{multline*}
which is equivalent to the expression $H_k(K)\cong E^\infty_{0,k}\oplus E^\infty_{1,k-1}$ in the spectral sequence notation.
This generalises to a more fundamental result of \cite{godement1958topologie}, where $E^\infty_{0,k}\oplus E^\infty_{1,k-1}$ extends to be the direct sum of off-diagonal terms.
To make this explicit, we identify the \textit{total complex} of $E^0$, denoted $T_\U$, to be the graded complex of diagonals from the blow-up complex $E^0$.
That is,
$(T_\U)_k:=\bigoplus_{p+q=k}E^0_{p,q}$.
The fact that squares in the above diagram anticommute means $(\D^0+\D^1)^2=0$, making $(T_\U,\D^0+\D^1)$ a differential graded complex.

\begin{theorem}
\[
H_k(K)\cong H_k(T_\U,\D^0+\D^1)\cong\bigoplus_{p+q=k}E^\infty_{p,q}
\]
\end{theorem}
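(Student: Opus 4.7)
The plan is to establish each isomorphism in turn. For the first isomorphism $H_k(K) \cong H_k(T_\U, \D^0+\D^1)$, I would introduce an augmentation chain map $\epsilon: T_\U \to C_\bullet(K)$ that sends any generator $(U_i, \sigma) \in E^0_{0,q}$ to $\sigma \in C_q(K)$ and vanishes on $E^0_{p,q}$ for $p > 0$. A direct unravelling of the definitions of $\D^0$ and $\D^1$ shows that $\epsilon$ intertwines $\D^0 + \D^1$ with the usual boundary on $K$. The heart of part one is then to check that $\epsilon$ is a quasi-isomorphism.

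To prove $\epsilon$ is a quasi-isomorphism, I would exploit the second spectral sequence of the bi-complex — namely, the one obtained by filtering $T_\U$ by rows rather than columns, so that $\D^1$-homology is taken first. For each fixed $q$, the row $E^0_{\bullet,q}$ augmented by $C_q(K)$ decomposes as a direct sum indexed by $q$-cells $\sigma$ of $K$. The summand associated to $\sigma$ is precisely the augmented simplicial chain complex of the full simplex on the finite index set $J_\sigma = \{i \in I : \sigma \in U_i\}$, which is contractible. Hence every augmented row is acyclic, which gives $\tilde E^1_{0,q} = C_q(K)$ with all other $\tilde E^1_{p,q} = 0$, and therefore $\tilde E^2_{0,q} = H_q(K)$ concentrated on the vertical axis. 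This spectral sequence collapses and converges to $H(T_\U)$, establishing $H_k(T_\U) \cong H_k(K)$.

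For the second isomorphism $H_k(T_\U,\D^0+\D^1) \cong \bigoplus_{p+q=k} E^\infty_{p,q}$, I would apply the standard machinery for first-quadrant bi-complexes. Equip $T_\U$ with the column filtration $F_p T_\U := \bigoplus_{p'\leq p,\, q} E^0_{p',q}$; this is bounded since $E^0_{p,q} = 0$ whenever $p<0$ or $q<0$, so the induced spectral sequence coincides with the MVSS defined in the excerpt (the maps $\D^r$ arising as iterated zig-zags $\D^1 (\D^0)^{-1} \cdots$) and converges after finitely many pages with $r_{p,q} = \max(0, p+1, q+2)$. The $E^\infty$ page is identified with the associated graded of $H_\bullet(T_\U)$, and because we are working over a field $\F$, the short exact sequences relating successive pieces of the filtration all split, producing the direct-sum decomposition $H_k(T_\U) \cong \bigoplus_{p+q=k} E^\infty_{p,q}$.

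The main obstacle is the acyclicity of the augmented rows in step two: this is the combinatorial core of Mayer--Vietoris and relies essentially on the hypothesis that only finite intersections of elements of $\U$ are non-empty, so that each $J_\sigma$ is finite and spans a genuine contractible simplex. The final splitting over the diagonals is clean over a field but would require care if $\F$ were replaced by a more general ring, owing to the \emph{extension problem} alluded to in Section~\ref{sec:mvss-def}; the hypothesis that coefficients lie in a field is what makes the direct-sum statement, rather than a mere filtration, valid.
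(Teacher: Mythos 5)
Your argument is correct, but note that the paper does not prove this statement at all: it is quoted as a classical result of Godement, with only an informal gloss afterwards on the filtration of $H_k(T_\U,\D^0+\D^1)$ and the extension problem. What you have supplied is the standard double-complex proof, and it is sound. The first half (the augmentation $\epsilon:T_\U\to C_\bullet(K)$ together with acyclicity of the augmented rows) is exactly the right mechanism: for a fixed $q$-cell $\sigma$, the row summand is the augmented simplicial chain complex of the full simplex on $J_\sigma=\{i: \sigma\in U_i\}$, which is exact because $\U$ is a cover (so $J_\sigma\neq\emptyset$) and point-finite (so $J_\sigma$ is finite); running the row-filtered spectral sequence then identifies $H_\bullet(T_\U)$ with $H_\bullet(K)$. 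The second half, convergence of the column-filtered spectral sequence of a first-quadrant bi-complex to the associated graded of $H_\bullet(T_\U)$, plus splitting of the finitely many extensions in each total degree because $\F$ is a field, is likewise standard and correctly invoked; you are right that this is precisely where the field hypothesis enters and where the ``extension problem'' of Section~\ref{sec:mvss-def} would otherwise appear.

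Two small points. First, a mislabelling: the acyclicity of the augmented rows belongs to your first step (the quasi-isomorphism $\epsilon$), not to ``step two'' as you say in the closing paragraph. Second, since $\U$ here is typically infinite (indexed by $T\cong\Z^d$), it is worth saying explicitly that convergence is unproblematic because the bi-complex is concentrated in $p,q\geq 0$, each total degree meets only finitely many bidegrees, and each row summand is a finite complex by the local-finiteness hypothesis on $\U$; none of this affects the validity of your argument, but it is the hypothesis doing the work.
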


Explicitly, $H_k(T_\U,\D^0+\D^1)$ permits a filtration $\{H_k(T_\U,\D^0+\D^1)^i\,:\,i=0,\dots,k\}$ for which $E^\infty_{p,k-p}\cong H_k(T_\U,\D^0+\D^1)^p/H_k(T_\U,\D^0+\D^1)^{p-1}$.
The right-most isomorphism of the theorem applies in general only for vector spaces and free modules.
When we generalise to other modules (for instance persistence modules over the ring $\F[t]$), we run into an ``extension problem'' in how to determine $H_k(T_\U,\D^0+\D^1)^p$ from $E^\infty_{p,k-p}$ and $H_k(T_\U,\D^0+\D^1)^{p-1}$.

The isomorphism $H_k(T_\U,\D^0+\D^1)\to H_k(K)$ is induced by the (sum of) inclusion map(s) from $E^0_{0,k}$ into $C_k(K)$ (we denote these maps by $i^*_U$ below).
This relates a cycle in $K$ to a ``blow-up'' of its cells in each component of the cover (the $E^0_{0,k}$ component of $(T_\U)_k$) and the lower-dimensional intersections of these cells occurring in the higher-dimensional components of the cover (the $E^0_{p,k-p}$ components of $(T_\U)_k$ for $p>0$).
On the other hand, the isomorphism $H_k(T_\U,\D^0+\D^1)\to\bigoplus_{p+q=k}E^\infty_{p,q}$ describes a filtration of $H_k(T_\U,\D^0+\D^1)$, whereby $H_k(E^0_{0,k},\D^0+\D^1)\cong E^\infty_{0,k}$ and inductively $E^\infty_{p,k-p}\cong H_k(\bigoplus_{i=0}^pE^0_{i,k-i},\D^0+\D^1)/H_k(\bigoplus_{i=0}^{p-1}E^0_{i,k-i},\D^0+\D^1)$ for $0<p\leq k$.
We think of an element $\gamma\in E^\infty_{p,k-p}$ existing only due to $p$-fold intersections of the cover as it corresponds to a class of $H_k(T_\U,\D^0+\D^1)$ with representative cycles in $\bigoplus_{i=0}^pE^0_{i,k-i}$ but none in $\bigoplus_{i=0}^{p-1}E^0_{i,k-i}$.
\[
[\alpha_p]\in E^\infty_{p,k-p} \;\Longrightarrow\; \exists\, [(\alpha_0,\dots,\alpha_p,0,\dots,0)]\in H_k(T_\U,\D^0+\D^1) \;\Longleftrightarrow\;\left[\sum_{U\in\U}i_U^*(\alpha_0)\right] \in H_k(K)
\]

The benefits of the MVSS in application are two-fold.
First, that the isomorphism $H_k(K)\to\bigoplus_{p+q=k}E^\infty_{p,q}$ decomposes in such a way that a given basis of cycles is minimal with respect to intersections of the cover.
This is explicitly due to the above correspondence, the left implication is an equivalence whenever $p$ is the minimal value for which there exists $[(\alpha_0,\dots,\alpha_p,0,\dots,0)]\in H_k(T_\U,\D^0+\D^1)$.
Second, that we may replace one large (possibly infinite) calculation of homology for $H_\bullet(K)$ with many smaller calculations done in parallel.
The latter allows us to think of the MVSS as an algorithm for difficult homology calculations, where each page can be considered a refinement of $H_\bullet(K)$.

\subsection{Application to Periodic Cellular Complexes} \label{sec:mvss-2}

A partial solution to the problem of identifying toroidal cycles involves introducing a Mayer-Vietoris spectral sequence to calculate the homology.
Given a fundamental unit cell, $U$, observe that
\[
\U= \{\mathbf{t}(\overline{U})\,:\,\mathbf{t}\in T\}
\]
is a covering of $K$ by subcomplexes, so induces a blow-up complex $(E^0,\partial_0,\partial_1)$ of $K$ from which $H_\bullet(K)$ may be calculated.
There are still infinitely many cells and so we cannot implement homology calculations with matrices.
However, all homology calculations of the blow-up complex are locally defined and identical up to translation, offering the benefit of calculating the MVSS via many smaller parallel calculations on finite unit cells on each page\footnote{In principle, $\overline{U}$ may be replaced by any subcomplex whose periodic copies cover $K$.}.

Recall that $X_n$ denotes the bounded space of $n\times\cdots\times n$ copies of $q(K)$ with periodic boundary conditions.
An analogous construction creates a MVSS for $X_n$, which may instead be calculated with a finite algorithm.
The toroidal cycles of $X_n$ will be elucidated on the second (and higher) page(s) of this MVSS as a result of applying the nerve boundary map.

These cycles will intersect many components of the cover $\U$ and therefore be non-localised in $X_n$ -- reconstructed using higher-dimensional components of $\nerve(\U)$ and therefore in the right-most columns of $E^\infty$.
Conversely, we expect ``true'' cycles of $K$ to be localised in $X_n$,  
and so it is possible for some of these cycles to be reconstructed with lower-dimensional components of $\nerve(\U)$.

\begin{proposition}
If $\gamma\in H_\bullet(X_n)$ can be identified with an element of $E^\infty_{0\bullet}$ then $\gamma$ is a non-toroidal cycle.
\end{proposition}
\begin{proof}
If $\gamma$ corresponds to an element of $E^\infty_{0\bullet}$ then $\gamma$ in turn corresponds to an element of $H_\bullet(T_\U,\D^0+\D^1)$ with represenative in $E^0_{0\bullet}$.
But then $\gamma$ must be a sum of cycles, each of which is contained in a single copy of $\overline{U}$ and necessarily lifts to $K$.
\end{proof}

We now have two necessary 
conditions for identifying toroidal cycles of $X_n$ via a MVSS -- those that appear in a collection of \textit{strictly fewer} than $O(n^d)$ translationally equivalent cycles which also do \textit{not} correspond to an element of $E^\infty_{0\bullet}$.
These conditions provide a heuristic with which we can identify potential toroidal cycles.
We use ``heuristic'' in a similar sense to the heuristic in persistent homology that cycles with a large persistence are ``significant''.
In our case ``toroidal cycle'' has a precise definition, so once we have identified a potential candidate it is possible to test if it truly is toroidal. 
We leave the implementation of an explicit algorithm to automate this verification step to future work.

\subsection{Intersecting Planes Example}

In this section we demonstrate a relatively simple synthetic example for how to use the heuristic for identifying toroidal cycles.
This example is constructed to avoid false positive detections, and likely does not resemble real data.
However the example demonstrates how to interpret the MVSS and extract the corresponding homology basis to verify if a cycle is toroidal or not.
In particular, if one encounters a situation where $O(n^{d-1})$ toroidal generators are dominated by $\dim(E^{\infty}_{pq})=O(n^d)$ then the basis extraction may help identify this.

\begin{figure}[ht]
\centering
\begin{tabular}{ccc}
\includegraphics{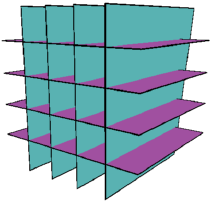} $\quad \quad$
&
\includegraphics[scale=0.8]{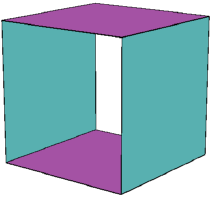}  $\quad \quad$
&
\includegraphics[scale=0.8]{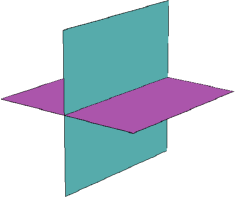}  $\quad \quad$
\end{tabular}
\caption{\emph{Left:} A 3-periodic cell-complex, $K$, constructed from two sets of orthogonal planes; \emph{center:} a choice of fundamental unit cell, $V$, used to construct a MVSS; \emph{right:} another choice of fundamental unit cell discussed at the end of the section. }
\label{fig:intersecting_planes}
\end{figure}

Consider two orthogonal planes $P_1$ and $P_2$ in $\R^3$ with orthonormal vectors $\mathbf{n}_1$ and $\mathbf{n}_2$ respectively.
Let $K=\bigcup_{n\in\Z}(P_1+n\mathbf{n}_1)\cup\bigcup_{m\in\Z}(P_2+m\mathbf{n}_2)$ so that $K$, Figure~\ref{fig:intersecting_planes} (left), is a $3$-periodic collection of square cylinders whose translation group, $T$, is generated by shifts $\mathbf{e}_1,\mathbf{e}_2,\mathbf{e}_3$ along $\mathbf{n}_1,\mathbf{n}_2$ and $\mathbf{n}_1\times\mathbf{n}_2$ respectively.
Without loss of generality, we take $\mathbf{n}_1,\mathbf{n}_2$ and $\mathbf{n}_1\times\mathbf{n}_2$ to be the standard basis vectors.

$K$ has a cellular decomposition into planar squares.
We choose the fundamental domain $[0,1)^3$ of $\R^3$ with respect to $T$ and let $V=K\cap[0,1]^3$.
$V$ resembles a $3$-cube with no interior and two sides removed as shown in Figure~\ref{fig:intersecting_planes} (center). 

Let $X_n$ be the union of $n\times n\times n$ ($n>2$) adjacent copies of $V$ with periodic boundary conditions imposed, and cover $X_n$ with (translated) copies of $V$.
Taking coefficients in a field $\F$, the non-trivial part of the blow-up complex $E^0$ will be as follows.

\begin{center}
\begin{tikzpicture}
\node (0,0) at (0,0) {$\F^{8n^3}$};
\node (1,0) at (1,0) {$\F^{28n^3}$};
\node (2,0) at (2,0) {$\F^{56n^3}$};
\node (3,0) at (3,0) {$\F^{70n^3}$};
\node (4,0) at (4,0) {$\F^{56n^3}$};
\node (5,0) at (5,0) {$\F^{28n^3}$};
\node (6,0) at (6,0) {$\F^{8n^3}$};
\node (7,0) at (7,0) {$\F^{n^3}$};
\node (0,1.5) at (0,1.5) {$\F^{12n^3}$};
\node (1,1.5) at (1,1.5) {$\F^{18n^3}$};
\node (2,1.5) at (2,1.5) {$\F^{12n^3}$};
\node (3,1.5) at (3,1.5) {$\F^{3n^3}$};
\node (0,3) at (0,3) {$\F^{4n^3}$};
\node (1,3) at (1,3) {$\F^{2n^3}$};

\path[every node/.style={font=\sffamily\small}]
        \foreach \i in {0,...,3}
        {
        (\i,1.2) edge[>=latex,->] node[right] {\tiny $\partial^0$} (\i,0.3)
        }
        \foreach \j in {0,1}
        {
        (\j,2.7) edge[>=latex,->] node[right] {\tiny $\partial^0$} (\j,1.8)
        };
        
\end{tikzpicture}
\end{center}

A simple cellular homology calculation gives the $E^1$ page below.

\begin{center}
\begin{tikzpicture}
\node (0,0) at (0,0) {$\F^{n^3}$};
\node (1.5,0) at (1.5,0) {$\F^{13n^3}$};
\node (3,0) at (3,0) {$\F^{44n^3}$};
\node (4.5,0) at (4.5,0) {$\F^{67n^3}$};
\node (6,0) at (6,0) {$\F^{56n^3}$};
\node (7.5,0) at (7.5,0) {$\F^{28n^3}$};
\node (9,0) at (9,0) {$\F^{8n^3}$};
\node (10.5,0) at (10.5,0) {$\F^{n^3}$};
\node (0,1) at (0,1) {$\F^{n^3}$};
\node (1.5,1) at (1.5,1) {$\F^{n^3}$};

\path[every node/.style={font=\sffamily\small}]
        \foreach \i in {1,...,7}
        {
        (1.5*\i-0.4,0) edge[>=latex,->] node[above] {\tiny $\partial^1$} (1.5*\i-1.1,0)
        }
        (1.1,1) edge[>=latex,->] node[above] {\tiny $\partial^1$} (0.4,1);
\end{tikzpicture}
\end{center}

$(E^1_{\bullet 0},\D_1)$ is exactly the chain complex of the nerve $\nerve$ of the cover by $\{\mathbf{t}(V)\}$.
$\nerve$ is a flag complex, where for $\mathbf{t}=\sum_{i=1}^3t_i\mathbf{e}_i$ and $\mathbf{s}=\sum_{i=1}^3s_i\mathbf{e}_i$, an edge joins the vertices representing $\mathbf{t}(V)$ and $\mathbf{s}(V)$ if and only if $\max|t_i-s_i|=1$.
In this way, we may identify $\nerve$ with the nerve of the cover of $\R^3/(n\Z)^3$ by unit cubes.
Each cube and its intersections with other cubes is acyclic for $n>3$, so $E^2_{\bullet 0}$ is given by $H(E^1_{\bullet 0},\D^1)\cong H_\bullet(\R^3/(n\Z)^3) \cong H_\bullet(\T^3)$.
Next, $E^1_{p, 1}\cong \F[(\Z/n\Z)^3]$ for $p=0,1$ where $\F[(\Z/n\Z)^3]$ denotes the $\F$-vector space with basis $(\Z/n\Z)^3$. The isomorphism identifies the sole generator of the homology of the $(i,j,k)^\mathrm{th}$ copy of $V$ with the basis element $(i,j,k)$.
This identification associates $\D_1:E^1_{11}\to E^1_{01}$ with the map $(i,j,k)\mapsto(i,j,k)+(i+1,j,k)$ which has rank $n^3-n^2$.
\begin{center}
\begin{tikzpicture}
\node (label) at (-3,0.5) {$E^2\,:$};
\node (0,0) at (0,0) {$\F$};
\node (1,0) at (1,0) {$\F^3$};
\node (2,0) at (2,0) {$\F^3$};
\node (3,0) at (3,0) {$\F$};
\node (0,1) at (0,1) {$\F^{n^2}$};
\node (1,1) at (1,1) {$\F^{n^2}$};

\path[every node/.style={font=\sffamily\small}]
        \foreach \i in {2,3}
        {
        (\i-0.25,0.05) edge[>=latex,->] node[above] {\tiny $\partial^2$} (\i-1.85,0.8)
        };
\end{tikzpicture}
\end{center}

Finally, we observe that $E^3=E^\infty$.
To calculate $E^3$ we note that $\D^2:E^2_{30}\to E^2_{11}$ must be injective, as otherwise this implies $H_3(X_n)\neq 0$ despite $X_n$ being a $2$-dimensional complex.
Note also that for a fixed choice of vertex $v$ in $V$ and the permutation $\varsigma$ which maps $\varsigma(i,j,k)=(k,j,i)$, the three classes of $E^2_{20}$ of the form
\[
\sum_{i,j} \left[ \left(\tr_{\varsigma^{\ell}(i,j,0)}(v),\left\{\tr_{\varsigma^{\ell}(i,j,0)},\tr_{\varsigma^{\ell}(i,j+1,0)},\tr_{\varsigma^{\ell}(i+1,j,0)}\right\}\right) - \left(\tr_{\varsigma^{\ell}(i,j,0)}(v),\left\{\tr_{\varsigma^{\ell}(i,j+1,0)},\tr_{\varsigma^{\ell}(i+1,j,0)},\tr_{\varsigma^{\ell}(i+1,j+1,0)}\right\}\right) \right]
\]
for $\ell=0,1,-1$ form a basis.
After diagram chase, each basis element will map under $\D^2$ to the sum over each copy of $V$ of a single square face (where any pair of basis elements will map to squares in orthogonal planes).
Only one of these three generators will have non-trivial image (the one whose square faces are not the boundary of a 2-cell) and so $\D^2$ has rank 1.
Thus the MVSS converges to the following complex.
\begin{center}
\begin{tikzpicture}
\node (label) at (-3,0.5) {$E^\infty\,:$};
\node (0,0) at (0,0) {$\F$};
\node (1.5,0) at (1.5,0) {$\F^3$};
\node (3,0) at (3,0) {$\F^2$};
\node (0,1) at (0,1) {$\F^{n^2-1}$};
\node (1.5,1) at (1.5,1) {$\F^{n^2-1}$};
\end{tikzpicture}
\end{center}

\begin{itemize}
    \item $E^\infty_{00}$ is identified with the single connected component of $X_n$, which will always be non-toroidal and tells us that $K$ is connected since this is true for each $n\in\N$.
    \item $E^\infty_{01}$ is identified with the square $1$-cycles in each copy of $V$ as discussed above, which again must be true cycles of $K$ and lift in the obvious way.
    \item $E^\infty_{10}$ is identified with three 1-cycles generated by orthogonal paths through $X_n$. These are toroidal and lift to construct the $1$-dimensional lattice framework of $K$ in the standard basis directions. In particular, these generate the 1-homology of the ambient torus.
    \item $E^\infty_{11}$ is identified with square tori 2-cycles in $X_n$.  $E^\infty_{11}$ is associated with the same square cylinders that generate $E^\infty_{01}$, except now these are viewed as sitting in the intersection of two adjacent copies of $V$. These in turn correspond to infinite square cylinders in $K$ which project onto square tori in $X_n$. There are $n^2-1$ cycles by analogy to $E^\infty_{01}$, all of which are toroidal.
    \item $E^\infty_{20}$ is represented in $E^0_{20}$ by the sum of vertices in three-fold intersections of co-planar copies of $V$. In $K$ these correspond to the collection of orthogonal planes (purple and cyan) which can be identified with open 2-tori in $X_n$. These are toroidal cycles. There are a constant number of generators of $E^\infty_{20}$ for similar reasons to $E^\infty_{10}$, and likewise correspond to generators of the 2-homology of the ambient torus.
\end{itemize}

The toroidal 2-cycles associated with $E^\infty_{11}$ are coupled to the $O(n^2)$ non-toroidal cycles associated with $E^\infty_{10}$.
The 2-cycles of the square tori associated to the classes of $E^\infty_{11}$ will exactly have boundaries which are a sum of squares described by $E^\infty_{10}$ when lifted to $K$.
When given the family of 2-cycles, one can construct the corresponding 1-cycles by following the proof of Theorem~\ref{thm:order}.
Conversely, when given the family of 1-cycles, one can construct the 2-cycles by following the proof of Theorem~\ref{thm:order-nec}.

By re-centering the fundamental domain of $K$ we can also construct examples of spectral sequences where both toroidal and non-toroidal cycles fall \textit{outside} the zeroth column with multiplicity $O(n^2)$.
Indeed, the X-shaped complex shown in Figure~\ref{fig:intersecting_planes}(right), and its periodic copies (with an appropriate new cellular decomposition), provide a cover of $X_n$ by acyclic pieces, where \textit{every} $k$-cycle of $X_n$ will appear in the $k^\mathrm{th}$ column and zeroth row of $E^\infty$.


\section{Discussion and Future Work}
\label{sec:Discussion}

We have seen that the homology of periodic graphs can be entirely quantified by the information stored in a weighted quotient graph, and that this information is also conducive to quantifying the behaviour of finite windows with periodic boundary conditions.
However, the weights of a weighted quotient graph do not  generalise in an obvious way so we have no similar method to treat higher dimensional cellular complexes.

The difficulty in generalising vector weights arises by how $q(K)$ glues opposite sides of a unit cell together.
The boundary map and $q$ commute so a chain in $K$ can map to a cycle in $q(K)$ if the chain boundary cells are in the same translation equivalence class of $q(K)$.
Suppose though that we were to have some tool which in general could distinguish between Case~2 and Case~4 of Lemma~\ref{lem:four-cases} for cycles in $q(K)$.
Since we also care about translation invariance of $K$, such a tool \textit{must} satisfy the following axioms in order to uncouple this information:
\begin{enumerate}
    \item Evaluates differently on Cases~2 and 4 of Lemma~\ref{lem:four-cases}
    \item Encodes information about the boundaries of cells
    \item Translation invariant when lifted to $K$.
\end{enumerate}
Analogous to the weights of weighted quotient graphs, we may also require that this \textit{explicitly} encodes the relative offsets of boundary cells when lifted to $K$
\begin{enumerate}
  \setcounter{enumi}{3}
  \item Assigns a value in $T$ to each cell in $q(K)$
  \item Respects the linearity of the chain group
\end{enumerate}
A relatively simple candidate which satisfies these axioms is a $T$-valued cochain of $q(K)$ which lifts in a ``nice'' way to $K$.
This motivates the notion of a $k$-\textit{weighted quotient space} of $K$ as being $q(K)$ equipped with a $k$-\textit{weight} $\omega_k\in C^k(q(K);T)$ satisfying the following properties
\begin{enumerate}
    \item There exists $f_{k-1}\in C^{k-1}(K;T)$ such that $df_{k-1}=\omega_k\circ q$
    \item For any $\tilde{\gamma}\in Z_k(q(K))$, $\omega_k(\tilde{\gamma})=0$ if and only if $\tilde{\gamma}=q(\gamma)$ for some $\gamma\in Z_k(K)$.
\end{enumerate}
This definition solves the issue of uniquely pairing boundary cells by pairing all of them, at the cost of losing information of relative offsets of any individual pair of boundary cells.
For $k=0$, $q(K)$ will have a unique $0$-weight $\omega_0=0$.
For $k=1$ we have shown that connected weighted quotient graphs exactly satisfy this definition of $1$-weighted quotient spaces, where $f_0$ is defined on a vertex $v$ in $K$ by its translation from the representative of its translational equivalence class and extends $\Z$-linearly to $C_k(K)$.
However, for disconnected WQG's this may fail.
We conjecture that $k$-weighted quotient spaces of $K$ exist and are well-defined (possibly with minor additional restrictions) and leave it as an open question as to whether other constructions of $k$-weighted quotient spaces may exist in general.

If weighted quotient spaces exist, an interesting problem with this approach is also to generalise the concept of these weights to a broader class of spaces.
In Theorem~\ref{thm:graphH0} and Theorem~\ref{thm:graphH1} we appealed to covering space theory and the fact that $T$ acts freely on $K$ to relate edge weights in $q(K)$ to the homology of $K$, but we may question which, if any, of these conditions may be relaxed and, if so, by how much.
Is it perhaps possible to define such weights on $q(K)$ to determine the homology of $K$ when $q:K\to q(K)$ is no longer even a fibration, such as for the projection of a Rips complex onto its shadow?

We have also shown that the Mayer-Vietoris spectral sequence and scaling analysis provides a heuristic for identifying toroidal cycles which partially addresses how to recover higher-dimensional homology of a periodic cell complex.

To conclude, we return to the discussion in Section~\ref{sec:Introduction} on periodic point patterns.
The persistent homology of distance filtrations on periodic point patterns will be neither pointwise finite dimensional nor q-tame, meaning there may not necessarily be an associated persistence diagram.
Birth and death pairs will appear with infinite multiplicity if such pairs are well-defined.
Our method of employing the Mayer-Vietoris spectral sequence will not necessarily be affected by this as the analysis is done only on finite quotient spaces.
The heuristics used with the MVSS will generalise to persistence calculations, although there are other problems we will encounter with the persistence-MVSS.
Namely, the MVSS will change for different choices of $T$ and corresponding $q(K)$.
Even ignoring this issue, one must also solve the extension problem, whereby an interval $[a,b)$ in the $(p+q)$-dimensional persistence diagram may, for example, decompose into the intervals $[a,c)$ and $[c,b)$ in the persistence diagrams of $E^\infty_{pq}$ and $E^\infty_{p-1,q+1}$ respectively.

On the other hand, our analysis of periodic graphs with weighted quotient graphs should still extend to a non-tame notion of persistence.
Recall $[G:H]=|G/H|$ denotes the cardinality of the quotient group $G/H$.
Our perspective, motivated by the proofs of Theorem~\ref{thm:graphH0} and Theorem~\ref{thm:graphH1}, suggests:
\begin{itemize}
    \item The addition of a new connected component in $q(K)$ causes infinitely many 0-cycle births in $K$.
    \item If the addition of an edge $e$ to a connected component $Q\subset q(K)$ causes $[W_{Q\cup\{e\}}\,:\,W_Q]=\infty$ then infinitely many 0-cycles of $K$ die and if $\rank(W_Q)>0$, infinitely many 1-cycles are born..  
    \item If the addition of an edge $e$ to connected component $Q$ causes $1<[W_{Q\cup\{e\}}\,:\,W_Q]<\infty$ 
    \begin{itemize}
        \item[$\rightarrow$] If $\rank(W_Q)=d$ then finitely many 0-cycles die and infinitely many 1-cycles are born in $K$,
        \item[$\rightarrow$] Otherwise, infinitely many 0-cycles die and infinitely many 1-cycles are born.
    \end{itemize}
    \item If after the addition of an edge $e$ to connected component $Q$ we have $W_{Q\cup\{e\}}=W_Q$ then infinitely many 1-cycles are born in $K$.
    \item If the addition of an edge $e$ joins two connected components $Q_1,Q_2$ in the WQG
    \begin{itemize}
        \item[$\rightarrow$] If $[T\,:\,W_{Q_1}],[T\,:\,W_{Q_2}]<\infty$ then finitely many 0-cycles die in $K$ and infinitely many 1-cycles are born,
        \item[$\rightarrow$] Otherwise infinitely many 0-cycles die in $K$, and if  $\rank(W_{Q_1}),\rank(W_{Q_2})>0$, then infinitely many 1-cycles will be born in $K$.
    \end{itemize}
    
\end{itemize}


\section*{Acknowledgements}\label{ackref}
The authors would like to thank the anonymous referees for their very detailed and insightful feedback, particularly with regards to Theorems~\ref{thm:graphH1} and~\ref{thm:order-nec}.
The authors would also like to thank Primoz Skraba for many helpful discussions, particularly in relation to the statement and proofs of Theorem~\ref{thm:order} and Theorem~\ref{thm:order-nec}. These conversations commenced at the Hausdorff Research Institute for Mathematics special program in Applied and Computational Topology, September 2017, and V.R.\ gratefully acknowledges financial support from the University of Bonn for her visit there.
Much of the development of this material also benefited from many conversations with Martin Helmer during his time at the Australian National University.
A.O.~acknowledges the support of the Additional Funding Programme for Mathematical Sciences, delivered by EPSRC (EP/V521917/1) and the Heilbronn Institute for Mathematical Research.

\bibliographystyle{plain}  
\bibliography{refsdb}

\end{document}